\newcounter{theorem}
\newtheorem{thm}[theorem]{Theorem}
\newtheorem{lemma}[theorem]{Lemma}
\newtheorem{prop}[theorem]{Proposition}
\newtheorem{cor}[theorem]{Corollary}
\newtheorem{defn}[theorem]{Definition}
\newtheorem{question}[theorem]{Question}
\theoremstyle{remark}
\newtheorem*{remark*}{Remark}
\newtheorem{remark}[theorem]{Remark}
\newtheorem{example}[theorem]{Example}
\numberwithin{equation}{section}
\numberwithin{theorem}{section}
\newcommand{\triplename}{standard triple}
\newcommand{\labelledthing}[2]{\hspace{4pt}\buildrel {#2} \over #1 \hspace{3pt}} 
\newcommand{\labelledrightarrow}{\labelledthing{\longrightarrow}}
\newcommand{\iso}{\cong}
\newcommand{\R}{\mathbb{R}}
\newcommand{\Q}{\mathbb{Q}}
\newcommand{\Z}{\mathbb{Z}}
\renewcommand{\setminus}{\backslash}
\renewcommand{\emptyset}{\varnothing}
\newcommand{\dunion}{\amalg}
\newcommand{\tens}{\otimes}
\newcommand{\dsum}{\oplus}
\newcommand{\e}{\epsilon}
\DeclareMathOperator{\Span}{span}
\newcommand{\eb}{\partial_e}
\newcommand{\degen}[1]{\mathrm{Degen}_{#1}}
\newcommand{\alabel}{\label}
\begin{document}

\title{A classification of finite rank dimension groups by their representations in ordered real vector spaces}
\author{Greg Maloney and Aaron Tikuisis}

\maketitle

\begin{abstract}
This paper systematically studies finite rank dimension groups, as well as finite dimensional ordered real vector spaces with Riesz interpolation.
We provide an explicit description and classification of finite rank dimension groups, in the following sense.
We show that for each $n$, there are (up to isomorphism) finitely many ordered real vector spaces of dimension $n$ that have Riesz interpolation, and we give an explicit model for each of them in terms of combinatorial data.
We show that every finite rank dimension group can be realized as a subgroup of a finite dimensional ordered real vector space with Riesz interpolation via a canonical embedding.
We then characterize which of the subgroups of a finite dimensional ordered real vector space have interpolation (and are therefore dimension groups).
\end{abstract}

\section{Introduction}
Dimension groups are interesting algebraically, being only slightly more general than lattice-ordered abelian groups (in fact, the class of dimension groups comprises exactly the groups obtained by taking inductive limits of lattice-ordered abelian groups \cite[Theorem 3.21]{Goodearl:book}).
Moreover, dimension groups are particularly relevant in operator algebras, as they often appear as the $K_0$-group of a C$^*$-algebra, and thus constitute a crucial component in the range of the Elliott invariant \cite{Elliott:classificationAF, Elliott:classificationRR0, Elliott:classificationAT, Villadsen:AHrange}.
Dimension groups also appear in the study of topological Markov chains \cite{Krieger:DimFns,CuntzKrieger:algebras}, coding theory \cite{Farhane:coding}, and have examples in number theory \cite[Proposition 4.4]{EffrosHandelmanShen}.

Systematic study of dimension groups has been undertaken previously.
Most famously, Effros, Handelman and Shen proved in \cite[Theorem 2.2]{EffrosHandelmanShen} an equivalence between an abstract characterization of dimension groups (as ordered groups that are unperforated and have Riesz interpolation) and a concrete one (as inductive limits of ordered abelian groups of the form $(\mathbb{Z}^r, \mathbb{N}^r)$).
A more detailed exploration of the structure of finite rank dimension groups is contained in works by Effros and Shen \cite{EffrosShen:DimGroups, EffrosShen:Geometry}; Theorem 1.4 of the latter classifies all finite rank dimension groups under the restriction of being simple.
Goodearl generalized this classification to all simple dimension groups in \cite[Theorem 14.16]{Goodearl:book}.
Dimension groups that are also ordered real vector spaces (which we will refer to as ordered real vector spaces with Riesz interpolation) were studied systematically by Fuchs in \cite{Fuchs:InterpVSpaces, Fuchs:rieszspaces}.
Fuchs primarily focuses on describing the topological and order-theoretic structure of such vector spaces under the additional assumption of being anti-lattices.
Additionally, he shows that any ordered real vector space with interpolation can be embedded into a cartesian product of antilattices.
However, even with this result, there is much more to be understood about the structure of ordered real vector spaces with interpolation.
In particular, the results contained here do not seem to follow from Fuchs', and indeed, our techniques are quite different from his.

This paper concerns the question of describing all finite rank dimension groups and all finite dimensional ordered real vector spaces with Riesz interpolation.
Our ultimate results are explicit descriptions and classifications of finite rank dimension groups and finite dimensional ordered real vector spaces with Riesz interpolation.
For the finite dimensional ordered real vector spaces with Riesz interpolation, Theorem \ref{Rvspaces-Classification}, shows that for each $n$, there are (up to isomorphism) finitely many ordered real vector spaces of dimension $n$ that have Riesz interpolation, and we give an explicit model for each of them in terms of combinatorial data.
In Corollary \ref{Dimension-subgroup} and the following remarks, we see that every finite rank dimension group can be realized as a subgroup of a finite dimensional ordered real vector space with Riesz interpolation, via a canonical embedding with the property that every isomorphism of the dimension groups lifts to an isomorphism of the real vector spaces.
Theorem \ref{ConditionsThm} characterizes which of the subgroups of a finite dimensional ordered real vector space have interpolation (and are therefore dimension groups).
Putting these together yields an explicit description of all finite rank dimension groups (up to isomorphism), in terms of subgroups of $\R^n$ and combinatorial data describing the positive cone; additionally, an explicit description of when such dimension groups are isomorphic yields a complete classification of these dimension groups; altogether, this classification is the content of Corollary \ref{dgroups-Classification}.

To give an idea of the nature of the results, consider an $n$-dimensional ordered real vector space $V$ with Riesz interpolation that has $n$ extreme states (i.e.\ the positive functionals on $V$ separate its points).
In this case, each ideal $I$ of $V$ is determined by which of the extreme states are nonzero on $I$.
This induces a lattice isomorphism between the ideals of $V$ and a sublattice of the subsets of extreme states on $V$ (or, by labelling the states with numbers $1$ through $n$, a sublattice of $2^{\{1,\dots,n\}}$); $V$ is determined, as an ordered real vector space, by this sublattice of $2^{\{1,\dots,n\}}$, up to a permutation of $\{1,\dots,n\}$.

Note that the simplifying assumptions of the last example assure that the extreme states on $V$ already give a natural representation of $V$ into $\R^n$, and this representation provides a nice presentation of the ideal structure and positive cone of $V$.
In the case that $V$ has fewer than $n$ extreme states, Theorem \ref{RepresentationThm} is a non-trivial tool providing a representation of $V$ that still gives a nice presentation of the ideal structure and the positive cone.
Theorem \ref{RepresentationThm} also applies to finite rank dimension groups (and indeed, to more general ordered groups than these two disjoint classes).
The classification of finite rank dimension groups, however, is more complicated than that of finite dimensional ordered real vector spaces with Riesz interpolation, because certain obstructions to having Riesz interpolation are automatically avoided by ordered real vector spaces.

Section \ref{Prelim} contains the definitions of the basic concepts related to dimension groups and some standard background results.
In Section \ref{Reps}, we explore the representation of dimension groups into $\R^n$, leading the way to the main result, Theorem \ref{RepresentationThm}.
This result gives a canonical representation into $\R^n$ of many dimension groups (including those that have finite rank and those that are finite dimensional ordered real vector spaces), and describes the positive cone as a pull-back of a reasonably nice cone in $\R^n$.
Theorem \ref{RepresentationThm} paves the way for the canonical embedding used in the classification results.
In Section \ref{Necessary-sufficient} we solve the problem of which subgroups of $\R^n$ with the reasonably nice cones as in Theorem \ref{RepresentationThm} are in fact dimension groups.
The solution of this problem is Theorem \ref{ConditionsThm}, which gives necessary and sufficient conditions for such an ordered group to have Riesz interpolation.
Sections \ref{Rvspaces} and \ref{dgroups} contain the classifications of finite dimensional ordered real vector spaces with interpolation and finite rank dimension groups respectively; these classifications are given simply by collecting and specializing the results in Theorems \ref{RepresentationThm} and \ref{ConditionsThm}.
\section{Preliminaries\alabel{Prelim}}

\begin{defn}
An \textbf{ordered abelian group} (or simply ordered group) is an abelian group $G$ that is equipped with a partial order, $\leq$, satisfying the following:
\begin{enumerate}
\item[(i)] Compatibility with addition: for all $g, h, x\in G$, if $g\leq h$ then $g + x \leq h + x$; and 
\item[(ii)] Directedness: for all $g,h \in G$, there exists $y \in G$ such that
\[ \begin{array}{c} g \\ h \end{array} \leq y. \]
\end{enumerate}
An \textbf{ordered vector space} over a field $F \subseteq \R$ is a vector space $V$ with an ordering $\leq$ such that $(V,\leq)$ is an ordered group, and additionally satisfying:
\begin{enumerate}
\item[(iii)] Compatibility with scalar multiplication: for all $g, h \in G$ and $r \in F$, if $g \leq h$ and $r \geq 0$ then $rg \leq rh$.
\end{enumerate}
\end{defn}

One associates to an ordered abelian group its positive cone,
\[
G^+ := \{ g\in G \ : \ g \geq 0\}.
\]
This is a cone in $G$, meaning that it is closed under taking sums.  
It is a strict cone, meaning $G^+ \cap -G^+ = \{0\}$ (a consequence of $\leq$ being an order instead of a pre-order), and directedness of $(G,\leq)$ amounts to $G^+ - G^+ = G$.
Moreover, given any strict cone $C \subseteq G$ such that $C-C=G$, it makes $G$ into an ordered abelian group via the order $g \leq_C h$ iff $h-g \in C$.
Since the cone encapsulates all of the order information about $G$, there is a one-to-one correspondence between ordered abelian groups and groups with a strict cone $C$ satisfying $C-C=G$.
Hence, it is common practice (that will be taken here) to call $(G,G^+)$ an ordered abelian group (instead of $(G,\leq)$).

In the case that $V$ is a vector space over $F$ that is an ordered group with positive cone $V^+$, being an ordered $F$-vector space is equivalent to $V^+$ being closed under scalar multiplication by positive elements of $F$.
We call a cone $C$ in a real vector space $V$ a \textbf{real cone} if it is closed under scalar multiplication by positive real numbers.

Every ordered group is torsion-free; this is an easy consequence of the existence of a strict cone that generates the group.

Whenever we have an ordered group $G$ and a subgroup $H$, we will treat $H$ as an ordered group via the induced order, that is, $H^+ = G^+ \cap H$.

\begin{defn}
Let $G$ be an ordered abelian group.
An \textbf{ideal} of $G$ is a subgroup $H$ satisfying the following:
\begin{enumerate}
\item[(i)] (Order-)convexity: if $g,h \in H$ and $x \in G$ satisfies $g \leq x \leq h$ then $x \in H$.
\item[(ii)] $(H,H^+)$ is itself an ordered group.
(Since other conditions of an ordered group are automatic, this amounts to $H$ being a directed subset, or $H=H^+-H^+$).
\end{enumerate}
The ordered group $G$ is said to be \textbf{simple} if it contains no ideals other than $\{0\}$ and $G$.
\end{defn}

When $V$ is an ordered vector space, note that every ideal is automatically a subspace.
To see this, note that if $I$ is an ideal and $x \in I^+$, then for any scalar $r \in [0,1]$ we have $0 \leq rx \leq x$, and thus $rx \in I$.
It is then easy to see that $rx \in I$ for any scalar $r$ and $x \in I^+$, and by directedness, this holds in fact for any $x \in I$.

\begin{defn}
An \textbf{order unit} of an ordered group $G$ is an element $u\in G^+$ such that, for all $g\in G$, there exists $n\in \mathbb{N}$ such that $-n u \leq g \leq nu$.  
This is equivalent to saying that $u$ generates $G$ as an ideal.
\end{defn}

\begin{defn}
A \textbf{positive functional} on an ordered group $G$ is a group homomorphism $\phi:G \to \R$ that is positive, meaning that if $g\in G^+$, then $\phi (g) \geq 0$.  
If $u$ is an order unit for $G$ then a positive functional $\phi$ is a \textbf{state} (with respect to $u$) if $\phi(u) = 1$.
Let us denote by $S(G, u)$ the set of all states on $G$.  
When $u$ is understood, let us write this simply as $S(G)$.  
\end{defn}

The set $S(G,u)$ is a convex subset of the vector space of functionals on $G$, (see \cite[Proposition 6.2]{Goodearl:book}).
Let us denote by $\eb S(G,u)$ the set of all extreme points of $S(G,u)$.
In an important sense, the extreme points of $S(G,u)$ do not depend on the choice of order unit $u$.
Namely, if $u,u'$ are different order units then, of course, for every $\phi \in S(G,u)$, there is a unique real number $k_\phi$ such that $k_\phi\phi \in S(G,u')$.
The map $\phi \to k_\phi \phi$ is of course a bijection, and although it may not be affine, it sends $\eb S(G,u)$ to $\eb S(G,u')$ \cite[Proposition 6.17]{Goodearl:book}.

The following result shows that we have order units (and therefore states) for the ordered groups that we are mostly concerned with in this paper.

\begin{prop}
Let $(G,G^+)$ be an ordered group.
The ideals of $G$ satisfy the ascending chain condition if and only if every ideal of $G$ has an order unit.
\end{prop}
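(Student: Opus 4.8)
The plan is to reduce both implications to a single construction: for a positive element $h \in G^+$, consider the set
\[ I(h) := \{ g \in G : -nh \leq g \leq nh \text{ for some } n \in \mathbb{N} \}. \]
First I would verify that $I(h)$ is an ideal — it is closed under addition and negation (add the bounds), convex (a chain $-nh \leq a \leq x \leq b \leq mh$ produces a bound for $x$), and directed (given $g_1, g_2$ bounded by $n_1 h$ and $n_2 h$, the element $(n_1+n_2)h$ dominates both) — and that it is the smallest ideal containing $h$, since any ideal containing $h$ contains each $\pm nh$ and hence, by convexity, all of $I(h)$. By construction $h$ is an order unit for $I(h)$. This lemma is the engine for both directions.

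For the implication that the ascending chain condition forces every ideal to have an order unit, I would fix an ideal $H$ and consider the family $\{ I(h) : h \in H^+ \}$, each member being contained in $H$ (as $I(h)$ is the smallest ideal containing $h$). Using the maximal-element form of the ascending chain condition, I would select a maximal member $I(h_0)$. The crux is to show $H = I(h_0)$, equivalently that every $h \in H^+$ lies in $I(h_0)$. For this I would pass to $h_0 + h \in H^+$ and observe that $I(h_0 + h)$ contains both $h_0$ and $h$ (indeed $0 \leq h_0 \leq h_0 + h$ and $0 \leq h \leq h_0 + h$), so $I(h_0) \subseteq I(h_0 + h)$; maximality forces equality, whence $h \in I(h_0)$. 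Since $H = H^+ - H^+$, this yields $H \subseteq I(h_0)$, so $h_0$ is an order unit for $H$.

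For the converse, suppose every ideal has an order unit and let $I_1 \subseteq I_2 \subseteq \cdots$ be an ascending chain of ideals. I would first check that the union $I = \bigcup_n I_n$ is again an ideal (subgroup, convexity, and directedness all follow from the corresponding properties of the $I_n$ together with the fact that any finite set of elements lies in a common $I_N$). Let $u$ be an order unit for $I$; then $u \in I_N$ for some $N$. For any $m \geq N$ and any $g \in I_m \subseteq I$ we have $-ku \leq g \leq ku$ for some $k$, and since $u \in I_N$ forces $\pm ku \in I_N$, convexity of $I_N$ gives $g \in I_N$. Hence $I_m = I_N$ for all $m \geq N$, and the chain stabilizes.

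I expect the main obstacle to be the forward implication, and specifically the recognition that the ideal generated by a sum $h_0 + h$ of positive elements dominates the ideals generated by each summand; this is what converts the purely order-theoretic maximality hypothesis into the existence of a single dominating element. The verification that $I(h)$ and the union $I$ are genuinely ideals is routine, but must be carried out with care, since convexity is used essentially in both directions.
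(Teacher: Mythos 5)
Your proof is correct and takes essentially the same route as the paper: both directions turn on the principal ideal $I(h)$ of elements bounded by multiples of $h \in G^+$ (for which $h$ is an order unit), and on the observation that $I(h_0)$ and $I(h)$ are contained in $I(h_0+h)$; the paper's forward direction builds an explicit strictly increasing chain of such ideals where you instead invoke the equivalent maximal-element form of the ascending chain condition, a purely cosmetic difference. Your write-up is in fact more detailed than the paper's sketch, which leaves the verification that $I(h)$ is an ideal and the accumulation step implicit.
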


\begin{proof}
Note that for each element $x \in G^+$, $x$ is an order unit in the ideal generated by $x$.

$\Rightarrow$: Suppose that the ideals of $G$ satisfy the ascending chain condition.
Let $H$ be an ideal of $G$, and let $x_1 \in H^+$.
If $x_1$ is not an order unit for $H$, then let $H_1$ be the ideal generated by $x_1$, so that $H_1 \subsetneq H$, and there exists $x_2 \in H^+ \setminus H_1$.
Continuing this way, we either arrive at an order unit for $H$ or get an infinite, strictly increasing chain of ideals.

$\Leftarrow$: Suppose that every ideal of $G$ has an order unit.
Let
\[ H_1 \subseteq H_2 \subseteq \cdots \]
be an increasing chain of ideals.
Then the union $H = \bigcup H_i$ is an ideal, and therefore it has an order unit $x$.
Therefore, $x \in (H_i)^+$ for some $i$, from which it follows that $H \subseteq H_i$.
\end{proof}

\begin{defn}
Let $G$ be an ordered group.
\begin{enumerate}
\item[(i)]
$G$ is \textbf{unperforated} if, for any $g \in G$ and any positive integer $n$, if $ng \geq 0$ then it must be the case that $g \geq 0$.
\item[(ii)]
$G$ has \textbf{Riesz interpolation} (or just interpolation, for short) if, for all $a_1, a_2, b_1, b_2\in G$ satisfying
\[
\begin{array}{l}
a_1\\
a_2
\end{array} \leq \begin{array}{l}
b_1\\
b_2
\end{array}
\]
there exists some $z\in G$, called an \textbf{interpolant}, such that
\[
\begin{array}{l}
a_1\\
a_2
\end{array} \leq z \leq \begin{array}{l}
b_1\\
b_2
\end{array}.
\]
\item[(iii)]
$G$ is a \textbf{dimension group} if it is unperforated and has Riesz interpolation.
\end{enumerate}
\end{defn}
We will use (often implicitly) some important facts proven in \cite{Fuchs:RieszGrps} about the ideals of a dimension group.

\begin{prop}
Let $G$ be a dimension group.
\begin{enumerate}
\item[(i)] \cite[Theorem 5.6]{Fuchs:RieszGrps}
The ideals of $G$ form a distributive lattice, with operations $(I,J) \mapsto I+J$ and $(I,J) \mapsto I \cap J$.
\item[(ii)] \cite[Proposition 5.8]{Fuchs:RieszGrps}
For ideals $I_1,I_2$ of $G$, we have $(I_1+I_2)^+ = I_1^+ + I_2^+.$
\end{enumerate}
\end{prop}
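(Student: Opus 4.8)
The plan is to extract everything from a single consequence of interpolation, the \textbf{Riesz decomposition property}: whenever $0 \leq y \leq a + b$ with $a, b \in G^+$, there exist $y_1, y_2 \in G^+$ with $y = y_1 + y_2$, $y_1 \leq a$ and $y_2 \leq b$. I would prove this first by one application of interpolation, choosing an interpolant $y_1$ lying above the pair $\{0, y - b\}$ and below the pair $\{a, y\}$; the four inequalities that must be checked to apply interpolation reduce exactly to $0 \leq a$, $0 \leq y$, $y \leq a+b$ and $0 \leq b$, all of which hold, and then $y_2 := y - y_1$ satisfies $0 \leq y_2 \leq b$.

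With the decomposition property in hand I would prove (ii). The inclusion $I_1^+ + I_2^+ \subseteq (I_1 + I_2)^+$ is immediate. For the reverse, take $x \in (I_1+I_2)^+$ and write $x = a + b$ with $a \in I_1$, $b \in I_2$. Using directedness of each ideal, choose $a_+ \in I_1^+$ with $a_+ \geq a$ and $b_+ \in I_2^+$ with $b_+ \geq b$, so that $0 \leq x \leq a_+ + b_+$. The decomposition property splits $x = x_1 + x_2$ with $0 \leq x_1 \leq a_+$ and $0 \leq x_2 \leq b_+$, and order-convexity of $I_1$ (resp. $I_2$) places $x_1 \in I_1^+$ and $x_2 \in I_2^+$.

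For (i) I would first verify that $+$ and $\cap$ genuinely are the join and meet on the poset of ideals, i.e.\ that $I+J$ and $I \cap J$ are again ideals; the subgroup property and the greatest-lower-bound / least-upper-bound properties are then formal. The real content is that $I \cap J$ is directed, which I would obtain by interpolating a given element $x$ and $0$ below upper bounds of $x$ drawn from $I^+$ and from $J^+$ (the resulting interpolant is positive, lies in both ideals by convexity, and dominates $x$), and that $I+J$ is order-convex, which follows from the decomposition property exactly as in the proof of (ii). Distributivity, $I \cap (J + K) = (I \cap J) + (I \cap K)$, is the last step: the inclusion $\supseteq$ is formal, and for $\subseteq$ it suffices, since both sides are ideals, to show every positive element $x$ of the left side lies in the right side; here part (ii) writes $x = y + z$ with $y \in J^+$, $z \in K^+$, and since $0 \leq y, z \leq x \in I$, order-convexity forces $y \in I \cap J$ and $z \in I \cap K$.

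The main obstacle is really the well-definedness of the lattice operations rather than distributivity itself: in a general ordered group the sum of two order-convex subgroups need not be order-convex, so interpolation (through the decomposition property) is indispensable precisely at that point. Once (ii) is available, distributivity is almost immediate, so I would expect the careful verification that $I+J$ and $I\cap J$ are ideals to be where the genuine work lies.
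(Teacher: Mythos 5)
Your proof is correct; note that the paper itself offers no argument here, simply citing Fuchs's \emph{Riesz groups} paper, so your write-up supplies the standard proof that the literature (Fuchs, and also Goodearl's book, Propositions 2.1 and 2.4 and the surrounding material) gives for these facts. Every step checks out: the derivation of the Riesz decomposition property from a single interpolation between $\{0,\,y-b\}$ and $\{a,\,y\}$ is the classical one, and the four prerequisite inequalities do reduce exactly as you say; in (ii) the convention $(I_1+I_2)^+ = G^+ \cap (I_1+I_2)$ matches the paper's convention on induced orders, and your use of directedness to replace $a,b$ by positive majorants before decomposing is exactly what is needed; in (i) the reduction of the inclusion $I \cap (J+K) \subseteq (I\cap J)+(I\cap K)$ to positive elements is legitimate because both sides are directed subgroups, and proving one distributive law suffices since either law implies the other in a lattice. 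Two remarks on what your route buys. First, your diagnosis of where the work lies is accurate: order-convexity of $I+J$ and directedness of $I\cap J$ are precisely the points where interpolation enters, and your treatment of both (decomposition for the former, a direct interpolation $x,0 \leq z \leq u,v$ with $u \in I^+$, $v \in J^+$ for the latter) is the efficient one. Second, your argument nowhere uses unperforation, so it establishes the proposition for arbitrary interpolation groups rather than just dimension groups --- which is the correct level of generality, consistent with the fact that the paper later invokes these conclusions (e.g.\ distributivity in Lemma \ref{zero-functional-technical-lemma}) in settings where only Riesz interpolation is assumed.
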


Let $G$ be a dimension group and let $I$ be an ideal of $G$.
Consider the intersection of all of the kernels of positive functionals $\phi$ defined on an ideal $J$ containing $I$ for which $\phi(I) = 0$, that is
\begin{equation}
K_I = \bigcup \{\ker \phi: \phi:J \to \R \textrm{ is positive, }J\textrm{ is an ideal, }I \subseteq J \textrm{ and } \phi(I) = 0\}.
\alabel{KI-definition}
\end{equation}
Evidently, $K_I$ contains $I$; however, they may not be equal.
Their inequality represents a sort of degeneracy in the positive cone, as is best illustrated in the case that $G$ is a finite dimensional real ordered vector space $V$ that is simple.

In this case, the number of extreme states on $V$ is equal to the dimension of $V$ exactly when the closure of $V^+$ contains no non-trivial subspace (in fact, the difference between $\dim V$ and $|\eb S(V)|$ is exactly the dimension of the largest subspace contained in $\overline{V^+}$).
The largest subspace of the closure of $V^+$ can in fact be found by taking the intersection of the kernels of the extreme states on $V$ (or equivalently, the kernels of all positive functionals on $V$).

In the non-simple case, there may be positive functionals on some ideal that do not extend as positive functionals on the entire ordered group, which is why we use positive functionals on ideals $J$ containing $I$ in \eqref{KI-definition}.
We define the \textbf{degeneracy quotient} of $I$ as
\[ \degen{I} := K_I/I. \]

For an ordered group $G$ and a field $F$ contained in $\R$, we may form the tensor product $G \tens_{\Z} F$, into which $G$ naturally embeds.
Using the smallest (vector space) cone in $G \tens_{\Z} F$ that contains the image of $G^+$ makes $G \tens_{\Z} F$ an ordered vector space, and the embedding $G \to G \tens_{\Z} F$ is an order embedding precisely if $G$ is unperforated.

\begin{prop}\alabel{Qtensor}
Let $G$ be an unperforated ordered group.
\begin{enumerate}
\item[(i)] There is a one-to-one correspondence between the ideals of $G$ and those of $G \tens_\Z \Q$ given by $I \mapsto I \tens_{\Z} \Q$.
\item[(ii)] If $u \in G$ is an order unit then $u \tens 1$ is an order unit for $G \tens_\Z \Q$ and $S(G,u) = S(G \tens_\Z \Q, u \tens 1)$.
\item[(iii)] If $G$ has Riesz interpolation then so does $G \tens_\Z \Q$.
\end{enumerate}
\end{prop}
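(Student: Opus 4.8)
The plan is to work throughout with the concrete description of $V := G \tens_\Z \Q$ as the divisible hull of the torsion-free group $G$, so that every element of $V$ can be written as $\frac{1}{m}g$ for some $g \in G$ and $m \in \mathbb{N}$. The first thing I would record is a clean description of the positive cone: since $V^+$ is the smallest rational cone containing $G^+$ and $G^+$ is closed under addition, clearing denominators gives $V^+ = \{\frac{1}{m}h : m \in \mathbb{N},\ h \in G^+\}$. Because $G$ is unperforated, the embedding $G \hookrightarrow V$ is an order embedding, i.e. $V^+ \cap G = G^+$: if $g \in G$ equals $\frac{1}{m}h$ with $h \in G^+$, then $mg = h \geq 0$, so $g \geq 0$. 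These two facts are the workhorses for all three parts, since they let me move inequalities freely between $V$ and $G$ by clearing a common denominator.

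For (i), given an ideal $I$ of $G$ I would check that $I \tens_\Z \Q$ (the $\Q$-span of $I$ inside $V$) is an ideal: convexity follows by clearing a common denominator in a sandwich $x \leq z \leq y$, transporting it down to $G$ via the order embedding, and applying convexity of $I$; directedness follows by writing an element of $I$ as a difference of elements of $I^+$ and dividing. Injectivity amounts to $(I \tens_\Z \Q) \cap G = I$, and surjectivity amounts to showing that for any ideal $J$ of $V$ the set $I := J \cap G$ is an ideal of $G$ with $J = I \tens_\Z \Q$. The inclusion $J = I \tens_\Z \Q$ is formal once $I$ is known to be an ideal (clear denominators and use that $J$ is a subspace). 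The substantive point common to injectivity and to the directedness of $J \cap G$ is the following divisibility lemma: if $g \in G$ and $mg \in I$ for some $m \in \mathbb{N}$, then $g \in I$. I would prove it by first using directedness of $I$ to find $h \in I^+$ with $-h \leq mg \leq h$, then noting $h \leq mh$ (since $h \geq 0$ and $m \geq 1$) to get $mg \leq mh$ and $-mg \leq mh$, and finally invoking unperforation to divide by $m$, yielding $-h \leq g \leq h$ and hence $g \in I$ by convexity. The same sandwich-and-unperforate argument, applied to $mg$ written as a difference of elements of $I^+$, produces the decomposition of $g$ needed for directedness of $J \cap G$.

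Parts (ii) and (iii) are then short. For (ii), order-unit-ness of $u \tens 1$ follows by writing $v = \frac{1}{m}g$, bounding $-Nu \leq g \leq Nu$ in $G$, and rescaling to an integer multiple of $u$ using $u \in V^+$; the identification $S(G,u) = S(V, u \tens 1)$ follows because a group homomorphism out of the $\Q$-vector space $V$ is automatically $\Q$-linear, so every state of $G$ extends uniquely to $V$ by $\frac{1}{m}g \mapsto \frac{1}{m}\phi(g)$, and positivity transfers in both directions through the cone description of $V^+$, with the two assignments (restriction and extension) mutually inverse. For (iii), given $a_1,a_2 \leq b_1,b_2$ in $V$ I would clear a common denominator to write $a_i = \frac{1}{m}\alpha_i$ and $b_j = \frac{1}{m}\beta_j$ with $\alpha_i,\beta_j \in G$; the order embedding turns each hypothesis $a_i \leq b_j$ into $\alpha_i \leq \beta_j$ in $G$, an interpolant $\zeta \in G$ then exists by Riesz interpolation in $G$, and $\frac{1}{m}\zeta$ is the desired interpolant in $V$.

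The main obstacle is the divisibility lemma in part (i): it is the one place where unperforation is genuinely needed (rather than just the order-embedding property), and the bijection of ideal lattices — in particular the directedness of $J \cap G$ in the surjectivity argument — stands or falls with it. Everything else reduces, after clearing denominators, to the corresponding statement in $G$ together with the structure of $V^+$.
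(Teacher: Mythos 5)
Your proposal is correct, and it is worth noting that the paper does not actually write out a proof of this proposition: for (i) it simply defers to the last part of \cite[Lemma 2.1]{EffrosShen:Geometry}, and it declares (ii) and (iii) easy. Your write-up therefore supplies a complete self-contained argument where the paper gives a citation, and it does so at the stated level of generality (arbitrary unperforated ordered groups, not just the finite rank dimension groups of the cited lemma). The structural choices are sound: the description $V^+ = \{\frac{1}{m}h : m \in \mathbb{N},\ h \in G^+\}$ and the order-embedding property $V^+ \cap G = G^+$ are exactly the right reductions, and your divisibility lemma --- if $mg \in I$ then $g \in I$, proved by sandwiching $mg$ between $\pm h$ for some $h \in I^+$ (directedness of $I$ plus unperforation to see $h \geq 0$), bounding $h \leq mh$, and dividing by $m$ via unperforation before invoking convexity --- is precisely the substantive content that the cited Effros--Shen lemma carries; you correctly isolate it as the one place where unperforation is used beyond the order-embedding property, both for injectivity ($(I \tens_\Z \Q) \cap G = I$) and for directedness of $J \cap G$ in the surjectivity step. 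Two small points deserve to be made explicit in a final write-up: in the surjectivity argument you use that an ideal $J$ of $G \tens_\Z \Q$ is a $\Q$-subspace, which is justified by the paper's earlier remark that ideals of ordered vector spaces over a field $F \subseteq \R$ are automatically subspaces (the $0 \leq rx \leq x$ argument works verbatim over $\Q$); and in the divisibility lemma one should say a word about why the upper bound $h$ of $\{mg, -mg\}$ supplied by directedness can be taken in $I^+$ (from $2h \geq 0$ and unperforation, or by bounding $\{mg,-mg,0\}$). Neither is a gap --- both are one-line checks --- and parts (ii) and (iii) are handled exactly as the paper's ``easy'' suggests, by clearing denominators and using uniqueness of the $\Q$-linear extension of a state.
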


\begin{proof}
(i) The proof of the last part of the statement of \cite[Lemma 2.1]{EffrosShen:Geometry} works here; (ii) and (iii) are easy.
\end{proof}

\section{Representation using functionals\alabel{Reps}}

The main result of this section is Theorem \ref{RepresentationThm}, which describes how to represent certain dimension groups in $\R^n$, in such a way that the ideals and the positive cone of the dimension group are described using the embedding and some combinatorial data.  
The next example shows that, to find such a representation, it is not enough simply to tensor a dimension group $G$ with $\R$.

\begin{example}\alabel{TensorEx}
Let $\theta\in \mathbb{R}$ be irrational and let $G = \mathbb{Z}^2$ equipped with the positive cone
\[
G^+ := \{ 0\} \cup \{ (a,b) \in G \ : \ a + \theta b > 0\}.
\]
Then $(G, G^+)$ is a dimension group \cite{EffrosShen:ContinuedFractions}.

Tensoring $G$ with $\R$ yields a two-dimensional order real vector space, namely $\R^2$ with positive cone
\[
\{ 0\} \cup \{ (a,b) \in \R^2 \ : \ a + \theta b > 0\}.
\]
The map $G \to \R$ given by $(a, b) \mapsto a + \theta b$ is a positive embedding of $G$ into $\R$, and is, in fact, the embedding that we will produce in Theorem \ref{RepresentationThm}.  
The embedding of $G$ into $\R^2$ is undesirable because it introduces degeneracy into $\R^2$ that was not present in the original group $G$.  
In particular, the trivial ideal $\{ 0\}$ of $G$ is the kernel of the unique state on $G$, but within $\R^2$ the kernel of the unique state is non-trivial.
\end{example}

When degeneracy is present in the group $G$ (i.e., when $\degen{I} \neq 0$ for some $I$), additional data (an embedding of $\degen{I}$ into a real vector space) is requested by the following theorem to produce the embedding of $G$.

\begin{thm}\alabel{RepresentationThm}
Let $G$ be a dimension group that has finitely many ideals, finitely many extreme states on each ideal, and such that, for every ideal $I$, there exists an embedding $\psi_I:\degen{I} \to \R^{k_I}$ (for some $k_I$), for which $\psi_I(\degen{I})$ spans $\R^{k_I}$.
Then there exists a map $\phi = (\phi_1,\dots,\phi_n):G \to \R^n$, a sublattice $\mathcal{S}$ of $2^{\{1,\dots,n\}}$ and, for each $S \in \mathcal{S}$, a subset $P^>_S$ of $S$ such that:
\begin{enumerate}
\item[(i)] $\phi$ is one-to-one and $\phi(G)$ spans $\R^n$.
\item[(ii)] The positive cone is given by
\[ G^+ = \bigcup_{S \in \mathcal{S}} \{g \in G: \phi_i(x) > 0\ \forall i \in P^>_S, \phi_i(x)=0\ \forall i\not\in S\}. \]
\item[(iii)] There is a lattice isomorphism between $\mathcal{S}$ and the ideals of $G$, given by $S \mapsto I_S$ where for $S \in \mathcal{S}$,
\[ I_S = \{g \in G: \phi_i(g) = 0\ \forall i \not\in S\}. \]
\item[(iv)] The order units of the ideal $I_S$ are exactly all $g \in I_S$ for which
\[ \phi_i(g) > 0 \ \forall  i \in P^>_S. \]
\item[(v)] For each $S \in \mathcal{S}$, $\phi(I_S)$ spans
\[ \{(x_1,\dots,x_n) \in \R^n: x_i = 0\ \forall i \not\in S\} \]
and the real cone generated by the image under $\phi$ of the order units of $I_S$ is
\[ \{(x_1,\dots,x_n) \in \R^n: x_i = 0\ \forall i \not\in S, x_i > 0\ \forall i \in P^>_S\}. \]
\item[(vi)] For each ideal $I$ of $G$ which is not the proper intersection of two ideals, let $S \in \mathcal{S}$ be such that $I = I_S$, and set
\[ D_S := \bigcup_{T \supset S} P^>_T \cup S. \]
Then $|D_S| = n-k_I$, and upon identifying $\R^{k_{I}}$ with
\[ \{(x_1,\dots,x_n): x_i = 0\ \forall i \in D_S\}, \]
(by sending the $i^{\text{th}}$ coordinate to the $i^{\text{th}}$ nonzero coordinate), the following commutes
\[
\begin{array}{rcl}
\degen{I} & \to & G/I \\ 
\scriptsize{\psi_{I}} \downarrow\ \  & & \downarrow\ \  \scriptsize{\alpha} \\
\R^{k_{I}} & = & \{(x_1,\dots,x_n): x_i = 0\ \forall i \in D_S\}
\end{array}
\]
where $\alpha(g) = (x_1,\dots,x_n)$ with $x_i = 0$ if $i \in D_S$ and $x_i = \phi_i(g)$ otherwise.
\end{enumerate}
Moreover, the number $n$, the lattice $\mathcal{S}$, and the subsets $P^>_S$ are determined (up to a permutation of the indices $\{1,\dots,n\}$) by the lattice of ideals of $G$, the space of positive functionals on each ideal, the restriction maps between the spaces of positive functionals, and the value of $k_i$.
The embedding $\phi$ is determined (up to a permutation of the indices $\{1,\dots,n\}$) by the ordered group $G$ and the embeddings $\psi_I$.
\end{thm}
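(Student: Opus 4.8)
The plan is to construct the data $(\phi,\mathcal S,\{P^>_S\})$ explicitly and then verify the six properties together with uniqueness, leaning throughout on the structure theory recalled in Section \ref{Prelim}: the lattice of ideals is finite and distributive, $(I+J)^+=I^++J^+$, and in a simple dimension group with finitely many extreme states a nonzero element lies in the positive cone exactly when every extreme state is strictly positive on it. I would begin with the base case in which $G$ is simple. Here the only ideals are $\{0\}$ and $G$; let $e_1,\dots,e_m$ enumerate $\eb S(G,u)$ and note that $\degen{\{0\}}=K_{\{0\}}=\bigcap_i\ker e_i$. Using that $\R$ is divisible (hence an injective $\Z$-module), extend the given embedding $\psi_{\{0\}}\colon K_{\{0\}}\to\R^{k}$ to a homomorphism $G\to\R^{k}$, and set $\phi:=(e_1,\dots,e_m,\psi_{\{0\}})\colon G\to\R^{n}$ with $n=m+k$. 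Injectivity is immediate, since $\phi(g)=0$ forces $g\in\bigcap_i\ker e_i=K_{\{0\}}$, where $\psi_{\{0\}}$ is already injective; spanning follows because the $e_i$, being extreme states, are affinely independent functionals taking value $1$ at $u$ and hence linearly independent, while $\phi(K_{\{0\}})=\{0\}^m\times\psi_{\{0\}}(K_{\{0\}})$ spans the last $\R^{k}$. Taking $\mathcal S=\{\emptyset,\{1,\dots,n\}\}$ and $P^>_{\{1,\dots,n\}}=\{1,\dots,m\}$ then reduces (ii)--(vi) to the cited description of the positive cone and order units of a simple dimension group.

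For the general case I would assemble the coordinates from the simple subquotients dictated by the ideal lattice $L$. Each covering pair $I_-\lessdot I$ (equivalently, each join-irreducible ideal $I$ with its unique lower cover) yields a simple dimension group $I/I_-$, whose base-case data supplies a block of \emph{state coordinates}; dually, the meet-irreducible ideals $P$ (those with a unique upper cover, so that $G/P$ has a smallest nonzero ideal) are where I would attach a block of \emph{degeneracy coordinates}, taken from the given $\psi_P\colon\degen{P}\to\R^{k_P}$. Each such functional is defined \emph{a priori} only on an ideal or a subquotient; I would lift it to all of $G$ using divisibility of $\R$, arranging that the state coordinates are lifted as \emph{positive} functionals and that the degeneracy block at $P$ realizes $\psi_P$ on $\degen{P}$, exactly as demanded by the commuting square in (vi). Concatenating the blocks gives $\phi\colon G\to\R^{n}$; the assignment $I\mapsto\{\,i:\phi_i|_I\neq 0\,\}$ defines the candidate sublattice $\mathcal S$, and $P^>_S$ is the set of state coordinates that detect positivity at level $S$.

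Verification then proceeds property by property. Injectivity and spanning in (i) follow as in the base case, now filtered along a maximal chain of ideals: a nonzero kernel element would have to be degenerate at every layer and annihilated by every $\psi_P$, which is impossible. That $\mathcal S$ is a sublattice of $2^{\{1,\dots,n\}}$ and that $I\mapsto S$ is a lattice isomorphism (iii) is where I would invoke distributivity of $L$ together with the identity $(I+J)^+=I^++J^+$. The heart of the argument is the positive-cone formula (ii), with its localized forms (iv) and (v): I would show that an element $g$ whose smallest containing ideal is $I_S$ is positive precisely when its image is an order unit of $I_S$, and that being an order unit is detected exactly by strict positivity of the state coordinates in $P^>_S$ (the degeneracy coordinates in $S\setminus P^>_S$ remaining free). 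One direction simply uses that states are positive; the reverse—that the listed sign pattern already forces $g\in G^+$—is the genuinely delicate point, and I expect to prove it by induction on the height of $S$ in $\mathcal S$, feeding the interpolation property and $(I+J)^+=I^++J^+$ to pass positivity from the simple layers up through the filtration.

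Finally, uniqueness is obtained by checking that every ingredient of the combinatorial data is recovered from intrinsic invariants: the lattice $\mathcal S$ is $L$ re-expressed through its join-irreducibles; the sizes of the state blocks are the numbers of extreme states of the simple subquotients, read off from the spaces of positive functionals and their restriction maps; the sizes of the degeneracy blocks are the prescribed ranks $k_I$; and the splitting of each $S$ into $P^>_S$ and its complement records which coordinates are positivity-detecting. The embedding $\phi$ is then pinned down, up to permuting indices within blocks, by the extreme states (canonical up to labelling and the scaling fixed by $u$) and by the prescribed $\psi_I$. I expect the positive-cone formula (ii)/(iv)/(v) to be the main obstacle: it is the one place where the order structure, rather than bookkeeping of the lattice and of functionals, does the work, and it must propagate the simple-case characterization of order units through the entire ideal filtration while simultaneously keeping the degeneracy coordinates matched to the prescribed $\psi_I$.
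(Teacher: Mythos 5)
Your architecture genuinely parallels the paper's (germ coordinates for extreme states plus degeneracy blocks attached to the ideals that are not proper intersections, lifts via divisibility of $\R$, and the Effros--Handelman--Shen order-unit criterion at the end), but there are two concrete gaps at exactly the points where the paper does its real work. First, your plan to lift the state coordinates to all of $G$ ``as positive functionals'' is impossible in general: in the lexicographic order on $\R^2$ (positive cone $\{y>0\}\cup\{x>0,\,y=0\}\cup\{0\}$), the unique state $x$ on the axis ideal admits no positive extension to $G$ whatsoever. What is actually needed --- and what your proposal never supplies --- is the germ-extension theory of Lemmas \ref{StateRestriction}, \ref{RestrictionOneToOne} and \ref{StatePullBack} and Corollary \ref{GlobalStates}: each extreme state on an ideal extends, uniquely and as a scalar multiple of an extreme state, to a well-defined \emph{maximal} ideal of positivity, with gluing over sums of ideals proved using interpolation. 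Without this, your coordinate functionals (defined block-by-block on simple subquotients and lifted arbitrarily beyond) do not coherently induce the extreme states on every intermediate ideal, so the very definition of $P^>_S$ and the verification of (ii), (iv), (v) do not reduce to the simple base case as you claim. (A smaller slip in the same vein: indexing state blocks by \emph{covering pairs} overcounts --- in $\R^2$ with $\mathcal{S}=2^{\{1,2\}}$ there are four covering pairs but only two germs; your parenthetical join-irreducible reading is the correct one, and the bijection between germs and extreme states of $I/I_-$ for join-irreducible $I$ itself requires the uniqueness statement of Lemma \ref{RestrictionOneToOne}.)

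Second, and more seriously, you dispose of spanning in (i) with one clause (``filtered along a maximal chain of ideals''), whereas linear independence of all $n$ functionals is the technical heart of the paper (Proposition \ref{linindep}). A single maximal chain cannot carry this argument: the coordinate functionals are arbitrary homomorphic extensions beyond their ideals of positivity, so their restrictions along the chain are uncontrolled there; germs born at join-irreducibles off the chain and degeneracy blocks at distinct meet-irreducibles have overlapping supports; and the paper's induction cannot even proceed by inclusion alone --- it requires a bespoke partial order $\prec$ (so that ideals which are proper intersections and maximal in some $J_s'$ are treated before $J_s$) together with the trichotomy of Lemma \ref{zero-functional-technical-lemma}. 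Relatedly, you have mislocated the difficulty: once independence is in hand, the cone formula (ii)/(iv) is comparatively soft --- strict positivity of the coordinates in $P^>_{S}$ makes $g$ an order unit of $I_S$ by \cite[Theorem 1.4]{EffrosHandelmanShen}, and (ii) follows because every positive element is an order unit in the ideal it generates --- so your proposed induction on height ``feeding interpolation'' into the cone formula attacks the wrong bottleneck and would not substitute for the missing independence argument. The base case and the uniqueness paragraph are fine, but as it stands the proposal's general case has genuine holes at both load-bearing steps.
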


\begin{remark}
This last result allows us to represent in $\R^n$ any countable dimension group with finitely many ideals and finitely many extreme states on each ideal---since in that case, the rank of $\degen{I}$ is at most $\aleph_0$, so that we can in fact find an embedding $\psi_I:\degen{I} \to \R$.
However, if $\psi_I$ is allowed to be chosen in such an arbitrary manner, then the last theorem can produce different representations $\phi,\phi':G \to \R^n$ such that for no vector space isomorphism $\alpha:\R^n \to \R^n$ does the following commute
\[ \begin{array}{rcl}
G & \labelledrightarrow{\phi} & \R^n \\
= & & \downarrow \scriptsize{\alpha} \\
G & \labelledrightarrow{\phi'} & \R^n.
\end{array} \]

In the following situations, we can pick $\psi_I$ canonically, so that different representations given by Theorem \ref{RepresentationThm} do lift to vector space isomorphisms of $\R^n$.
\begin{enumerate}
\item[(i)] If $G$ is a finite dimensional ordered vector space over $\R$ then $\degen{I}$ is itself a finite dimensional vector space, so we can let $\psi_I$ be a vector space isomorphism.
\item[(ii)] If $G$ is a finite dimensional ordered vector space over $\Q$ then $\degen{I}$ is itself a finite dimensional vector space over $\Q$, so we can let $\psi_I$ be the embedding $\degen{I} \to \degen{I} \tens \R$.
\item[(iii)] More generally, if $\degen{I}$ has finite rank then $\degen{I} \tens \R$ is a finite dimensional real vector space, and again we let $\psi_I$ be the embedding $\degen{I} \to \degen{I} \tens \R$.
\end{enumerate}
\end{remark}

A proof of Theorem \ref{RepresentationThm} comes by close examination of the states on the ideals of $G$.

For ideals $I \subseteq J$, we have a restriction map $r$ from the positive functionals on $J$ to those on $I$.
This restriction map sends each state on $I$ either to zero or to a scalar multiple of a unique state on $J$.
Modulo this adjustment by a scalar multiple, the upcoming results may be summarised as follows.

\begin{itemize}
\item Lemma \ref{StateRestriction}: $r$ sends $\eb S(I)$ to $\eb S(J) \cup \{0\}$.
\item Lemma \ref{RestrictionOneToOne}: $r$ is one-to-one on the set of extreme states that don't map to zero.
\item Lemma \ref{StatePullBack} can be summarized as the existence of a pull-back state in the following diagram (where every map is a restriction map)
\[
\begin{array}{rrcll}
 & \exists \tau \in & \eb S(I_1 + I_2) & & \\
 & \swarrow & & \searrow & \\
\tau_1 \in \eb S(I_1) & & & & \eb S(I_2) \ni \tau_2 \\
 & \searrow & & \swarrow & \\
 & \tau_2 \in & \eb S(I_1 \cap I_2) & & \\
\end{array}
\]
\end{itemize}

\begin{lemma}\alabel{StateRestriction}
Let $G$ be an ordered group with Riesz interpolation and $I$ an ideal.
Suppose that $G,I$ have order units $u,v$ respectively.
Then for $\tau \in \eb S(G,u)$, $\tau|_I$ is a scalar multiple (possibly $0$) of some $\tau' \in \eb S(I,v)$.
\end{lemma}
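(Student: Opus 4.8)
The plan is to normalize $\tau|_I$ to a state on $(I,v)$ and then show that state is extreme, by transporting any convex decomposition of it back up to a convex decomposition of $\tau$ on $G$, where extremality of $\tau$ collapses it. First I would dispose of the trivial case: if $\tau|_I = 0$ the conclusion holds with scalar $0$ and any $\tau' \in \eb S(I,v)$. So assume $\tau|_I \neq 0$. Since $v$ is an order unit of $I$, were $\tau(v)=0$ then for each $x \in I$ we have $-nv \leq x \leq nv$ for some $n$, whence $0 \leq \tau(x) \leq 0$ and $\tau|_I = 0$, a contradiction; thus $\tau(v) > 0$ and $\tau' := \tau(v)^{-1}\,\tau|_I \in S(I,v)$. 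It then suffices to prove $\tau' \in \eb S(I,v)$.

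To that end I would suppose $\tau' = \lambda\sigma_1 + (1-\lambda)\sigma_2$ with $\sigma_1,\sigma_2 \in S(I,v)$ and $\lambda \in (0,1)$, aiming to force $\sigma_1 = \sigma_2$. Set $\mu := \lambda\,\tau(v)\,\sigma_1$, a positive functional on $I$ with $0 \leq \mu \leq \tau|_I$, since $\tau|_I - \mu = (1-\lambda)\,\tau(v)\,\sigma_2 \geq 0$. The crucial input is a \emph{dominated-extension property}: if $G$ has interpolation, $I$ is a subgroup, $\tau$ is a positive functional on $G$, and $\mu$ is a positive functional on $I$ with $\mu \leq \tau|_I$, then $\mu$ extends to a positive functional $\tilde\mu$ on $G$ with $0 \leq \tilde\mu \leq \tau$. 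Granting this, pick such a $\tilde\mu$ and put $s := \tilde\mu(u)$. Because $\tilde\mu|_I = \mu$ satisfies $\mu(v) = \lambda\,\tau(v) > 0$, $\tilde\mu$ is not the zero functional, and a positive functional annihilating the order unit $u$ is identically $0$; hence $s > 0$. Symmetrically $(\tau-\tilde\mu)|_I = (1-\lambda)\,\tau(v)\,\sigma_2 \neq 0$ forces $(\tau-\tilde\mu)(u) > 0$, so $s < 1$. Now
\[ \tau = s\cdot\frac{\tilde\mu}{s} + (1-s)\cdot\frac{\tau-\tilde\mu}{1-s} \]
expresses $\tau$ as a convex combination of two states of $(G,u)$, so extremality of $\tau$ gives $\tilde\mu = s\tau$. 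Restricting to $I$ yields $\lambda\,\tau(v)\,\sigma_1 = s\,\tau(v)\,\tau'$; evaluating at $v$ forces $s = \lambda$, hence $\sigma_1 = \tau'$, and then $\tau' = \lambda\tau' + (1-\lambda)\sigma_2$ gives $\sigma_2 = \tau'$ as well. Thus $\tau'$ is extreme and $\tau|_I = \tau(v)\,\tau'$ is the desired scalar multiple.

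The main obstacle is the dominated-extension property itself (this is exactly the point where the degeneracy warned about in the text — non-extendability of positive functionals on ideals — is overcome, the domination $\mu \leq \tau|_I$ supplying the bounds that a bare ideal functional lacks). I would prove it by Zorn's lemma, extending $\mu$ one generator at a time: given a dominated extension to an intermediate subgroup $H$ with $I \leq H \leq G$ and an element $g_0 \in G \setminus H$, I must select a value $c$ at $g_0$ so that both the extension and its difference from $\tau$ stay positive on $H + \Z g_0$. This reduces to a gap condition, namely that the supremum of the lower bounds on $c$ imposed by the relations $h \pm g_0 \geq 0$ does not exceed the infimum of the corresponding upper bounds. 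The diagonal comparisons follow from positivity alone, but the cross comparisons reduce to inequalities of the form $\mu(a) \leq \tau(b)$ for elements with $a \leq b$ obtained by interpolating between the given relations, and it is precisely here that Riesz interpolation is invoked. (If a suitable dominated-extension statement is available in \cite{Goodearl:book}, this step may instead be cited, shortening the argument considerably.)
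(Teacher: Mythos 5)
Your top-level argument is correct and takes a genuinely different route from the paper. The paper proves the lemma by contradiction: after replacing $G$ by $G \otimes_\Z \Q$, it supposes $\tau|_I = f_1 + f_2$ with $f_1,f_2$ linearly independent positive functionals on $I$, uses the separation result \cite[Proposition I.9.1]{GoodearlHandelmanLawrence} to manufacture $b \in G$ with $\tau(b)$ close to $\min\{\tau(x),\tau(y)\}$ while $b$ lies strictly below $x,y$ under every extreme state, and then gets a contradiction from a single interpolation $0,b \leq z \leq x,y$ (with $z \in I^+$ by convexity). You instead normalize $\tau|_I$ to a state $\tau'$, transport a convex decomposition of $\tau'$ up to $G$ via a dominated-extension lemma, and let extremality of $\tau$ collapse it; the dominated-extension property is indeed true for ideals of interpolation groups, and granting it, your normalization, the verification $0 < s < 1$, and the chain $\tilde\mu = s\tau$, $s=\lambda$, $\sigma_1 = \sigma_2 = \tau'$ are airtight. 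Your route avoids the affine-representation machinery and the passage to $G \otimes \Q$ entirely, at the cost of having to prove the extension lemma; it also isolates a reusable statement of independent interest.

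The genuine gap is in your sketched proof of that lemma. First, your statement says ``$I$ is a subgroup,'' but convexity and directedness of the ideal are essential: in $G = \Z^2$ with the coordinatewise order and $\tau(x,y) = x+y$, the subgroup $H = \Z(1,-1)$ has $H \cap G^+ = \{0\}$, so the homomorphism with $\tilde\mu(1,-1) = 2$ is vacuously positive and dominated, yet any extension with $0 \leq \tilde\mu \leq \tau$ satisfies $\tilde\mu(1,-1) = \tilde\mu(1,0) - \tilde\mu(0,1) \in [-1,1]$, so none exists. This matters because the intermediate subgroups $H$ in your Zorn chain are \emph{not} ideals, and the example shows that ``positive and dominated on $H$'' is an insufficient inductive invariant: a maximal element of that poset need not have domain $G$, since the one-generator step can genuinely fail. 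Concretely, your cross comparisons require $\tilde\mu(a) \leq \tau(b)$ whenever $a \in H$, $b \in G^+$, $a \leq b$ (and dually $\tau(d) \leq \tilde\mu(a)$ whenever $d \leq 0$ and $d \leq a$), and this does not follow from positivity and domination on $H$ alone --- the interpolant $z$ with $a, 0 \leq z \leq b$ need not lie in $H$, so $\tilde\mu(z)$ is meaningless. The fix is either to carry these two $G$-relative sandwich conditions as the inductive invariant (they hold at the base $H = I$ by interpolating twice and using convexity and directedness of $I$, and they are preserved provided the bounds on $c$ are drawn from these relations rather than from positivity in $H + \Z g_0$), or, much more cleanly, to bypass Zorn altogether: define $\tilde\mu(g) := \sup\{\mu(x) : x \in I,\ 0 \leq x \leq g\}$ for $g \in G^+$, which is finite because $\mu \leq \tau|_I$, and additive because Riesz decomposition (equivalent to interpolation) writes $0 \leq x \leq g_1 + g_2$ as $x = x_1 + x_2$ with $0 \leq x_i \leq g_i$, where convexity of $I$ gives $x_i \in I$; then extend additively to $G = G^+ - G^+$, and check $\tilde\mu|_I = \mu$ and $0 \leq \tilde\mu \leq \tau$ directly. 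That is exactly where Riesz interpolation enters, and with it your proof is complete.
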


\begin{proof}
Replacing $G$ by $G \tens \Q$, by Proposition \ref{Qtensor}, we may assume that $G$ is an ordered $\Q$-vector space with Riesz interpolation.

Suppose for a contradiction that $\tau|_I = f_1 + f_2$ where $f_1,f_2$ are linearly independent positive functionals on $I$.
So, there exist $x,y \in I^+$ such that
\[ f_1(x) < f_2(x) \text{ and } f_1(y) > f_2(y). \]

Let $\e > 0$ be such that
\[ f_1(x) < f_2(x) - \e \text{ and } f_2(y) < f_1(y) - \e. \]
By \cite[Proposition I.9.1]{GoodearlHandelmanLawrence}, let $b \in G$ be such that
\begin{equation} \min \{\tau(x), \tau(y)\} - \e < \tau(b) \alabel{StateRestriction-tau-b} \end{equation}
and
\[ \tau'(b) < \begin{array}{c} \tau'(x) \\ \tau'(y) \end{array} \]
for all $\tau' \in \eb S(G,u)$.
Consequently, we have
\[ \begin{array}{c} 0 \\ b \end{array} \leq \begin{array}{c} x \\ y, \end{array} \]
so using interpolation there exists $z \in G$ such that
\[ \begin{array}{c} 0 \\ b \end{array} \leq z \leq \begin{array}{c} x \\ y. \end{array} \]

Since $0 \leq z \leq x,y$, we must have $z \in I^+$ and
\[ f_i(z) \leq \begin{array}{c} f_i(x) \\ f_i(y) \end{array} \]
for $i=1,2$.

But then,
\begin{align*}
\tau(b) &\leq \tau(z) \\
&= (f_1+f_2)(z) \\
&\leq f_1(x) + f_2(y) \\
&\leq f_1(x) + f_2(x) - \e \\
&= \tau(x) - \e,
\end{align*}
and likewise,
\[ \tau(b) \leq \tau(y) - \e. \]
This contradicts \eqref{StateRestriction-tau-b}.
\end{proof}

\begin{lemma}\alabel{RestrictionOneToOne}
Let $G$ be an ordered group with Riesz interpolation and $I$ an ideal.
Suppose that $G$ has an order unit $u$.
Let $\tau_1,\tau_2 \in \eb S(G,u)$ be such that $\tau_1 \neq \tau_2$.
If $\tau_2|_I$ is a scalar multiple of $\tau_1|_I$ then the $\tau_2|_I = 0$.
\end{lemma}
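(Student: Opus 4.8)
The plan is to argue by contradiction, paralleling the proof of Lemma \ref{StateRestriction}, with one extra ingredient: a separation step that is the only place where extremality of $\tau_1,\tau_2$ is genuinely used (the statement is false for arbitrary states). As there, I would first replace $G$ by $G\tens\Q$ using Proposition \ref{Qtensor}, so that $G$ may be assumed to be an ordered $\Q$-vector space with interpolation; this preserves the states, the extreme states, the ideal $I$ (now $I\tens\Q$), and the hypothesis $\tau_2|_I=c\,\tau_1|_I$. Suppose for contradiction that $c\neq 0$ and $\tau_2|_I\neq 0$; then $c>0$ and $\tau_1|_I\neq 0$, so I may fix $x\in I^+$ with $t:=\tau_1(x)>0$, whence $\tau_2(x)=ct$.

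The separation step is where I would spend the most care. The key claim is that $\tau_1$ is not dominated by any scalar multiple of $\tau_2$ on $G^+$: for every $K>0$ there is $g\in G^+$ with $\tau_1(g)>K\,\tau_2(g)$. Indeed, if $\tau_1\leq K\tau_2$ on $G^+$, then $K\tau_2-\tau_1$ is a positive functional, so $\tau_1$ is a positive functional dominated by $K\tau_2$; since $\tau_2$ is an extreme, hence indecomposable, state---this indecomposability being exactly the phenomenon exploited in the proof of Lemma \ref{StateRestriction}, applied with the ideal equal to $G$---any positive functional below a multiple of $\tau_2$ is itself a multiple of $\tau_2$, forcing $\tau_1=\tau_2$ (both are states, so the scalar is $1$), contrary to $\tau_1\neq\tau_2$. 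Using this claim with $K$ large and rescaling $g$ by a suitable positive rational (any interval of positive length meets $\Q$), I would produce $x'\in G^+$ with $\tau_1(x')\geq t$ and $\tau_2(x')<2t/K$, hence as small as desired.

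Finally I would run the interpolation argument. Applying \cite[Proposition I.9.1]{GoodearlHandelmanLawrence} to the positive elements $x,x'$, the state $\tau_1$, and a small $\e>0$---just as in Lemma \ref{StateRestriction}---I obtain $b\in G$ with $b\leq x$, $b\leq x'$ and $\tau_1(b)>\min\{\tau_1(x),\tau_1(x')\}-\e=t-\e$. Since $0,b\leq x,x'$, interpolation yields $z$ with $0,b\leq z\leq x,x'$. From $0\leq z\leq x$ and $x\in I^+$, order-convexity of $I$ forces $z\in I$, so the hypothesis gives $\tau_2(z)=c\,\tau_1(z)$. But then $\tau_2(z)=c\,\tau_1(z)\geq c\,\tau_1(b)>c(t-\e)$, while simultaneously $\tau_2(z)\leq\tau_2(x')$, which can be made smaller than $c(t-\e)$ by the choice of $x'$; taking $\e=t/2$ and $K$ large forces $c\leq 0$, the desired contradiction.

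The main obstacle is the separation step: one must pinpoint exactly where extremality enters, since distinct non-extreme states can have proportional nonzero restrictions. The remaining bookkeeping---meshing the approximate value of $\tau_1(b)$ and the order relations from Proposition I.9.1 with the convexity constraint that pins $z$ inside $I$---is routine and mirrors Lemma \ref{StateRestriction}. I note that only extremality of $\tau_2$ is used in the separation and only extremality of $\tau_1$ in the appeal to Proposition I.9.1, both of which are available.
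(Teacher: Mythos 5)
Your proof is correct, and its crucial step takes a genuinely different route from the paper's. Both arguments share the same skeleton: reduce to an ordered $\Q$-vector space via Proposition \ref{Qtensor}, use \cite[Proposition I.9.1]{GoodearlHandelmanLawrence} to produce $b$ lying below two positive elements, interpolate $0,b \leq z \leq x,\cdot\,$, and use convexity of $I$ to force $z \in I$, where the proportionality hypothesis yields a numerical contradiction. The difference is how the distinctness $\tau_1 \neq \tau_2$ is made quantitative. The paper invokes Edwards' separation theorem \cite{Edwards} on the Choquet simplex $S(G,u)$ to produce $y$ with $\tau_1(y) \in (0,1/2)$, $\tau_2(y) > c/2$, and $\tau(y) > 0$ for all $\tau \in \eb S(G,u)$, and then pinches $\tau_2(b) \in (c/2, \tau_2(y))$; the contradiction is read off at $\tau_2$. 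You instead use only the purity of the extreme state $\tau_2$: if $\tau_1 \leq K\tau_2$ on $G^+$, then writing $K\tau_2 = \tau_1 + (K\tau_2 - \tau_1)$ and normalizing at $u$ exhibits $\tau_2$ as a convex combination of states, forcing $\tau_1 = \tau_2$; hence for every $K$ there is $g \in G^+$ with $\tau_1(g) > K\tau_2(g)$, and rational rescaling (here the passage to $G \tens \Q$ earns its keep a second time) produces your $x'$. This is sound --- I checked the quantifier order and the final chain $c(t-\e) < \tau_2(z) \leq \tau_2(x') < 2t/K$ with $\e = t/2$, $K > 4/c$ --- and it buys elementarity: Edwards' theorem and the simplex structure of the state space are avoided entirely, and I.9.1 is used only in the one-sided ``approximate meet'' form already needed for Lemma \ref{StateRestriction}. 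Moreover, extremality is used only for $\tau_2$, so your argument in fact proves the slightly stronger statement in which $\tau_1$ is an arbitrary state distinct from $\tau_2$; your closing sentence is off on this point, since Proposition I.9.1 requires no extremality of $\tau_1$ (any state serves as the evaluation point). Two small remarks: the purity fact should be proved directly as above rather than attributed to ``the phenomenon exploited in Lemma \ref{StateRestriction} with the ideal equal to $G$'' --- that lemma's proof uses interpolation for a genuinely harder statement, whereas what you need is elementary convexity; and your constant bookkeeping is, if anything, cleaner than the paper's, whose displayed contradiction chain swaps the roles of $\tau_1$ and $\tau_2$ relative to its stated hypothesis $\tau_1|_I = c\tau_2|_I$ (a typo-level slip).
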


\begin{proof}
Again, we may assume that $G$ is an ordered $\Q$-vector space with interpolation.
Suppose for a contradiction that $\tau_1|_I=c\tau_2|_I$, $c\in(0,\infty)$.
Let $x\in I^+$ be such that $\tau_1(x)> c/2$.
Notice that if $0 \leq z \leq x$ then $z \in I^+$ and so $\tau_1(z)=c\tau_2(z)$.
Since $\tau_1\neq \tau_2$, we can use \cite{Edwards} and \cite[Theorem I.9.1]{GoodearlHandelmanLawrence} to find $y \in G$ such that $\tau_1(y) \in (0,1/2)$ and $\tau_2(y) > c/2$, while $\tau(y)>0$ for all $\tau \in \eb S(G,u)$.
Likewise, we can find $b \in G$ such that $\tau_2(b) \in (c/2,\tau_2(y))$ and
\[ \tau(b) < \begin{array}{c} \tau(y) \\ \tau(x) \end{array} \]
for all $\tau \in \eb S(G,u)$.
Hence, we have
\[ \begin{array}{c} 0 \\ b \end{array} \leq \begin{array}{c} x \\ y \end{array}, \]
but if $z$ were an interpolant then
\[ \frac{c}2 < \tau_2(b) \leq \tau_2(z) = c\tau_1(z) < c\tau_1(y) < \frac{c}2. \]
Hence, this contradicts the fact that $G$ has Riesz interpolation.
\end{proof}

\begin{lemma}\alabel{StatePullBack}
Let $G$ be an ordered group with Riesz interpolation and $I_1,I_2$ be ideals such that $I_1+I_2=G$.
Suppose that $G,I_1,I_2$ have order units $u,v_1,v_2$ respectively.
Suppose that we have $\tau_i \in \eb S(I_i,v_i)$ for $i=1,2$, and that on $I_1 \cap I_2$, the $\tau_i$ are nonzero scalar multiples of each other.
Then there exists $\tau \in \eb S(G,u)$ such that for $i=1,2$, $\tau_i = c_i\tau|_{I_i}$ for some scalar $c_i$.
\end{lemma}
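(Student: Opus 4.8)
The plan is to build the desired extreme state by \emph{gluing} $\tau_1$ and $\tau_2$ across the decomposition $G = I_1 + I_2$. Since $\tau_1$ and $\tau_2$ restrict to nonzero scalar multiples of one another on $I_1 \cap I_2$ and both are positive, that scalar is positive; rescaling $\tau_2$ by it, I may assume $\tau_1$ and $\tau_2$ agree on $I_1 \cap I_2$ (this only changes the eventual constant $c_2$). I then define $\tau : G \to \R$ by $\tau(a_1 + a_2) = \tau_1(a_1) + \tau_2(a_2)$ for $a_i \in I_i$. Well-definedness is exactly where the agreement on $I_1 \cap I_2$ is used: any two such decompositions of a given element differ by an element of $I_1 \cap I_2$, on which $\tau_1$ and $\tau_2$ coincide. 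Additivity is then immediate, and positivity follows from the identity $G^+ = I_1^+ + I_2^+$ (a consequence of interpolation together with $I_1 + I_2 = G$; see \cite[Proposition 5.8]{Fuchs:RieszGrps}): a positive $g$ decomposes as $a_1 + a_2$ with $a_i \in I_i^+$, whence $\tau(g) = \tau_1(a_1) + \tau_2(a_2) \geq 0$.

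By construction $\tau|_{I_i}$ is a positive scalar multiple of $\tau_i$, so the conclusion $\tau_i = c_i \tau|_{I_i}$ will hold once $\tau$ is identified as a multiple of an extreme state. Since $\tau$ agrees with the state $\tau_1$ on $I_1$, it is nonzero; and because $u$ is an order unit, $\tau(u) > 0$ (otherwise $-nu \leq g \leq nu$ would force $\tau \equiv 0$), so $\tau/\tau(u)$ is a genuine state. It remains to prove that this state is extreme.

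For extremality I will use the elementary characterization of the extreme points of a state space, viewed as the base of the cone of positive functionals cut out by evaluation at the order unit (which is strictly positive on that cone): a state $\tau$ is extreme precisely when every positive functional $\sigma$ with $0 \leq \sigma \leq \tau$ (that is, $\tau - \sigma$ positive) satisfies $\sigma = c\tau$ for some $c \in [0,1]$. So let such a $\sigma$ be given. Restricting to $I_i$ gives $0 \leq \sigma|_{I_i} \leq \tau|_{I_i}$, and since $\tau|_{I_i}$ generates the same ray as the extreme state $\tau_i$, the same characterization applied in $S(I_i,v_i)$ yields $\sigma|_{I_i} = c_i\, \tau|_{I_i}$ for some $c_i \in [0,1]$. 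The crux is that these two a priori independent scalars must coincide: on $I_1 \cap I_2$ we have $c_1\, \tau|_{I_1 \cap I_2} = \sigma|_{I_1 \cap I_2} = c_2\, \tau|_{I_1 \cap I_2}$, and because $\tau$ restricts to a \emph{nonzero} functional there, $c_1 = c_2 =: c$. Finally $\sigma$ and $c\tau$ are homomorphisms agreeing on $I_1$ and on $I_2$, hence on $I_1 + I_2 = G$, so $\sigma = c\tau$ and $\tau$ is extreme. The only genuine obstacle in this argument is the extremality step, and its heart is precisely that the nonzero agreement on $I_1 \cap I_2$ forces $c_1 = c_2$; the construction of $\tau$ and its positivity are routine.
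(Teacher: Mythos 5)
Your proof is correct and follows essentially the same route as the paper's: glue $\tau_1$ and $\tau_2$ along $I_1 \cap I_2$ using $G = I_1 + I_2$ (with positivity from $(I_1+I_2)^+ = I_1^+ + I_2^+$), then derive extremality from the extremality of the $\tau_i$ by matching the two proportionality scalars on $I_1 \cap I_2$, where the glued functional is nonzero. The only cosmetic difference is that you phrase extremality via dominated positive functionals $0 \leq \sigma \leq \tau$ while the paper uses decompositions $f = g + h$ into positive functionals --- equivalent criteria --- and you additionally spell out the well-definedness, positivity, and normalization steps that the paper leaves implicit.
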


\begin{proof}
We can define $f:G \to \R$ such that $f|_{I_i} = d_i\tau_i$ for some scalars $d_i > 0$.
Then $f$ is a positive functional on $G$.
To show that $f$ is a scalar multiple of an extreme state, suppose that
\[ f = g+h \]
where $g,h$ are positive functionals on $G$, and let us show that $g,h$ are linearly dependent.

Since $f|_{I_1 \cap I_2} \neq 0$, WLOG, we have $g|_{I_1 \cap I_2} \neq 0$.
Then,
\[ d_1\tau_1 = f|_{I_1} = g|_{I_1} + h|_{I_1}, \]
and since $t_1$ is an extreme state, this implies that $g|_{I_1},h|_{I_1}$ are linearly dependent, so let
\[ h|_{I_1} = Kg|_{I_1} \]
for some scalar $K$.
Likewise, we see that $g|_{I_2},h|_{I_2}$ are linearly dependent, and since
\[ h|_{I_1 \cap I_2} = Kg|_{I_1 \cap I_2} \]
and $g|_{I_1 \cap I_2} \neq 0$, we must have that
\[ h|_{I_2} = Kg|_{I_2}. \]
Thus, $h=Kg$, so $g,h$ are linearly dependent, as required.
\end{proof}

\begin{cor}\alabel{GlobalStates}
Let $G$ be an ordered group with Riesz interpolation, such that every ideal $I$ of $G$ has an order unit $v$ and $\eb S(I,v)$ is finite.
For any ideal $I$ with order unit $v$ and any $\tau \in \eb S(I,v)$, there exists some ideal $\tilde{I}$ with order unit $w$ and some $\tilde{\tau} \in \eb S(\tilde{I},w)$ such that $\tau$ is a scalar multiple of $\tilde{\tau}|_I$, and $\tilde{I}$ is maximal in the sense that for any ideal $J$ satisfying $I \subset J \not\subseteq \tilde{I}$, if $f:J \to \R$ satisfies $f|_I = \tau$ then $f$ is not a positive functional on $J$.
\end{cor}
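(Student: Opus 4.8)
The plan is to exhibit $\tilde{I}$ as the \emph{largest} ideal to which $\tau$ extends. Let
\[ \mathcal{J} := \{ J : J \text{ is an ideal of } G,\ I \subseteq J,\ \text{and some } \sigma \in \eb S(J,w_J) \text{ satisfies } \sigma|_I = c\tau \text{ for some } c > 0 \}, \]
where $w_J$ denotes an order unit of $J$. First I would record the preliminary facts that sums and intersections of ideals are again ideals (a routine consequence of the Riesz decomposition property, which is equivalent to interpolation), that these inherit interpolation from $G$ by convexity, and that by hypothesis each carries an order unit and finitely many extreme states; thus Lemmas \ref{StateRestriction}, \ref{RestrictionOneToOne} and \ref{StatePullBack} apply to all of them. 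I would then show that membership $J \in \mathcal{J}$ is equivalent to $\tau$ extending to a positive functional $f$ on $J$ (i.e.\ $f|_I = \tau$). Granting this equivalence, the maximality clause says precisely that no ideal properly containing $I$ and not contained in $\tilde{I}$ lies in $\mathcal{J}$; together with $\tilde{I} \in \mathcal{J}$, this is exactly the assertion that $\tilde{I} = \max \mathcal{J}$, and the required $\tilde{\tau} \in \eb S(\tilde{I}, w)$ with $\tau$ a scalar multiple of $\tilde{\tau}|_I$ is furnished by the membership $\tilde{I} \in \mathcal{J}$ itself.

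The heart of the argument, and the step I expect to be the main obstacle, is showing that $\mathcal{J}$ is closed under the join $J_1, J_2 \mapsto J_1 + J_2$. Given $J_1, J_2 \in \mathcal{J}$ with witnessing extreme states $\sigma_1, \sigma_2$, I would restrict both to $J_1 \cap J_2$: since $\sigma_i|_I$ is a nonzero multiple of $\tau$ and $I \subseteq J_1 \cap J_2$, Lemma \ref{StateRestriction} makes $\sigma_i|_{J_1 \cap J_2}$ a nonzero multiple of some extreme state $\rho_i$ of $J_1 \cap J_2$, and each $\rho_i|_I$ is again a nonzero multiple of $\tau$. Lemma \ref{RestrictionOneToOne}, applied to the ideal $I$ of $J_1 \cap J_2$, then forces $\rho_1 = \rho_2$, so $\sigma_1$ and $\sigma_2$ restrict to nonzero scalar multiples of one another on $J_1 \cap J_2$. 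This is exactly the hypothesis of Lemma \ref{StatePullBack} (with ambient group $J_1 + J_2$ and ideals $J_1, J_2$), which produces $\tau' \in \eb S(J_1 + J_2, w)$ restricting to scalar multiples of $\sigma_1$ and $\sigma_2$; in particular $\tau'|_I$ is a nonzero multiple of $\tau$, so $J_1 + J_2 \in \mathcal{J}$.

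With closure under joins in hand, I would invoke the earlier proposition that the hypothesis ``every ideal has an order unit'' forces the ascending chain condition on ideals. Hence the nonempty collection $\mathcal{J}$ (it contains $I$) has a maximal element $\tilde{I}$, and closure under joins upgrades this to a maximum: for any $J \in \mathcal{J}$ we have $\tilde{I} + J \in \mathcal{J}$ with $\tilde{I} + J \supseteq \tilde{I}$, forcing $J \subseteq \tilde{I}$. It remains to verify the equivalence used above. If $f$ is a positive functional on $J$ with $f|_I = \tau$, then $f$ cannot vanish at $w_J$ (otherwise $f \equiv 0$, contradicting $\tau \neq 0$), so a normalization of $f$ lies in $S(J, w_J)$, which---having finitely many extreme points---equals the convex hull of $\eb S(J, w_J)$ by Krein--Milman. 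Writing $f$ as a convex combination of extreme states of $J$ and restricting to $I$, Lemma \ref{StateRestriction} together with evaluation at the order unit of $I$ expresses $\tau$ as a convex combination of extreme states of $I$; extremality of $\tau$ then forces one contributing extreme state $\sigma$ of $J$ to satisfy $\sigma|_I = a\tau$ with $a > 0$, i.e.\ $J \in \mathcal{J}$. The converse direction is immediate, and assembling these pieces yields $\tilde{I}$ and $\tilde{\tau}$ with the stated properties.
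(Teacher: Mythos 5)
Your proposal is correct and takes essentially the same approach as the paper: the paper likewise builds $\tilde{I}$ from the family of ideals to which $\tau$ extends as a scalar multiple of an extreme state, closes this family under joins via Lemma \ref{StatePullBack} (with Lemmas \ref{StateRestriction} and \ref{RestrictionOneToOne} supplying the needed proportionality on the intersection), and handles arbitrary positive extensions by decomposing them over the finitely many extreme states of $S(J,w_J)$. The only cosmetic difference is that you obtain $\tilde{I}$ as a maximum via the ascending chain condition (from the earlier proposition on order units), whereas the paper defines it directly as the sum of all ideals in the family; your write-up also makes explicit several details the paper glosses.
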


\begin{proof}
Let $\tilde{I}$ be the sum of all ideals $J$ such that $\tau$ extends to $J$ (as a scalar multiple of an extreme state on $J$).
If $\tau$ extends to a scalar multiple of an extreme state on the ideal $J$ then, by Lemma \ref{RestrictionOneToOne}, this extension is unique.
By Lemma \ref{StatePullBack}, we can see that if $I_1$ and $I_2$ are both ideals, and $\tau$ extends to a scalar multiple of an extreme state on each of $I_1$ and $I_2$ then $\tau$ extends to a scalar multiple of an extreme state on $I_1 + I_2$.
Thus, we see that $\tau$ extends to a scalar multiple of an extreme state on all of $\tilde{I}$.

Obviously, $\tilde{I}$ is maximal in the sense that it contains every ideal upon which $\tau$ has an extension that is a scalar multiple of an extreme state.
However, if $\tau$ extends to a positive functional $f$ on an ideal $J \supset I$ then since $\eb S(J)$ is finite, $f$ can be written as a linear combination (with positive coefficients) of extreme traces on $J$, and it follows that one of the extreme traces appearing in the linear combination must restrict to a nonzero scalar multiple of $\tau$.
Thus, $\tilde{I}$ is also maximal in the sense stated.
\end{proof}

From Corollary \ref{GlobalStates}, we that when $G$ is a dimension group as in Theorem \ref{RepresentationThm}, we can define functionals $\tau_1, \dots, \tau_k:G \to \R$ such that for every ideal $I \subseteq G$ and every $\tau \in \eb S(I)$, there exists a unique $i$ such that $\tau$ is a scalar multiple of $\tau_i$.
For each $I$, $\tau_i|_I$ is positive if and only if $\tau_i$ is a scalar multiple of an extreme state on $I$.
For each $i$, there is a maximal ideal upon which $\tau_i$ is positive; we denote this ideal by
\[ I_{\tau_i}. \]

In order to achieve condition (iii) of Theorem \ref{RepresentationThm}, we may need to include more functionals than $\tau_1,\dots,\tau_k$ in the list $\phi_1,\dots,\phi_n$.
For example, consider the case that $G = \mathbb{R}^2$ with the order given by
\[ G^+ = \{(0,0)\} \cup \{(x,y) \in \R^2: x > 0\}. \]
In this case, the ordered group (an ordered real vector space) is simple, and it has only one extreme state (namely, the functional $x$).

For each ideal $I$ of $G$, $\degen{I}$ represents exactly the difference between the ideal $I$ and what we would get if we used only $\tau_1,\dots,\tau_k$ to pick out the ideal.
Thus, we shall use $\psi_I$ to give us more functionals which exactly recover each ideal.

If an ideal $I$ can be written as the proper intersection of two ideals $J_1$ and $J_2$, and if we can write each of $J_1$ and $J_2$ as an intersection of kernels of some of the functionals $\phi_1,\dots,\phi_n$ then of course we can do the same for $I$.
This is why we only need to use the embeddings $\psi_I$ for ideals $I$ which are not the proper intersection of two ideals.
The next result characterizes such ideals.

\begin{lemma}\alabel{MinSuperIdeal}
Suppose that the ideals of $G$ satisfy the descending chain condition.
Let $I$ be a proper ideal of $G$.
Then either $I$ is the proper intersection of two ideals (i.e.\ $I = J_1 \cap J_2$, $J_1 \neq I \neq J_2$) or there exists a unique ideal $I'$ that properly contains $I$ and that is minimal with respect to this condition (i.e.\ if $I \subsetneq J$ then $I' \subseteq J$).
\end{lemma}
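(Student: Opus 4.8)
The plan is to recognize the statement as a dichotomy in the lattice of ideals. If $I$ is the proper intersection of two ideals we are already in the first alternative and there is nothing to prove, so I would assume that $I$ is \emph{not} a proper intersection---equivalently, that whenever $I = J_1 \cap J_2$ for ideals $J_1, J_2$ one of $J_1, J_2$ equals $I$---and under this assumption produce the unique minimal ideal properly containing $I$. Throughout I use that the intersection of two ideals is again an ideal, which holds because the ideals of a dimension group form a lattice.

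First I would produce a candidate $I'$. The family $\mathcal{F}$ of all ideals $J$ with $I \subsetneq J$ is nonempty, since $I$ is proper and hence $G \in \mathcal{F}$. By the descending chain condition, $\mathcal{F}$ has a minimal element $I'$, meaning that no ideal lies strictly between $I$ and $I'$.

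The key step is to upgrade ``minimal'' to ``least'': I claim $I' \subseteq J$ for every $J \in \mathcal{F}$. Fix such a $J$ and consider the ideal $I' \cap J$, which contains $I$. If $I' \cap J = I$, then $I = I' \cap J$ would exhibit $I$ as the proper intersection of $I'$ and $J$ (both of which properly contain $I$), contradicting our assumption. Hence $I \subsetneq I' \cap J \subseteq I'$, and the minimality of $I'$ forbids anything strictly between $I$ and $I'$, forcing $I' \cap J = I'$; that is, $I' \subseteq J$. Thus $I'$ is contained in every ideal properly containing $I$, which is exactly the stated minimality property, and uniqueness is immediate, since any second ideal with this property would both contain $I'$ and be contained in it.

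The only real subtlety---and the step where the ``not a proper intersection'' hypothesis is genuinely used---is this last upgrade from ``minimal'' to ``least'': the descending chain condition by itself yields only a minimal super-ideal, and without the hypothesis on $I$ there could be several incomparable minimal super-ideals (which is precisely the meet-reducible case). Everything else is routine lattice bookkeeping, so I expect no serious obstacle.
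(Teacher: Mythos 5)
Your proof is correct, and it takes a genuinely (if mildly) different route from the paper's. The paper defines $I'$ directly as the intersection of \emph{all} ideals properly containing $I$: if $I' \neq I$ it is visibly the least proper super-ideal, and if $I' = I$ the descending chain condition is invoked to replace this (possibly infinite) intersection by a finite one, which is then a proper intersection of two ideals. You argue contrapositively: assuming $I$ is not a proper intersection, you extract a DCC-minimal element $I'$ of the family of proper super-ideals and upgrade ``minimal'' to ``least'' by meeting with an arbitrary $J \supsetneq I$ --- the ideal $I' \cap J$ cannot equal $I$ (that would be a forbidden proper intersection) and cannot lie strictly between $I$ and $I'$ (minimality), so $I' \cap J = I'$ and $I' \subseteq J$. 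Your version buys explicitness and economy: it uses only \emph{binary} intersections (so only closure of ideals under pairwise meets, from Fuchs' lattice theorem), whereas the paper's one-line proof leaves implicit both that the total intersection equals a finite one under DCC (itself a small minimal-element argument of exactly your kind) and that a finite intersection $I = J_1 \cap \cdots \cap J_k$ with each $J_i \supsetneq I$ can be regrouped as a proper \emph{binary} intersection. In exchange, the paper's construction yields the canonical formula $I' = \bigcap \{J : I \subsetneq J\}$, convenient for later reference. You also correctly isolated the one genuine subtlety: the passage from a minimal to a least super-ideal is precisely where the ``not a proper intersection'' hypothesis enters.
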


\begin{proof}
Let $I'$ be the intersection of all the ideals that properly contain $I$.
If $I' \neq I$ then it is clearly the minimum ideal that properly contains $I$.
Otherwise, $I' = I$ and since the ideals satisfy the descending chain condition, $I$ is the intersection of finitely many ideals which properly contain $I$.
This implies that $I$ is the proper intersection of two ideals.
\end{proof}

Whenever we have the set up of the last lemma, and the ideal $I$ is not the proper intersection of two ideals, we will continue to use the notation $I'$.

\begin{lemma}
Suppose that the ideals of $G$ satisfy the descending chain condition.
Let $I$ be a proper ideal of $G$ that is not the proper intersection of two ideals.
Let $f:G \to \R$ be a functional satisfying $f(I) = 0$ and $f(I') \neq 0$.
If $J \subset G$ is another ideal satisfying $f(J) = 0$ then $J \subseteq I$.
\end{lemma}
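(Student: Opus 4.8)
The plan is to compare $J$ with $I$ by forming the sum $I+J$ and playing it against the minimality of the ideal $I'$ supplied by Lemma~\ref{MinSuperIdeal}. First I would note that the sum of two ideals is again an ideal (the ideals of $G$ form a lattice under $(I,J)\mapsto I+J$ and $(I,J)\mapsto I\cap J$), and that $I\subseteq I+J$. The argument then splits on whether this containment is proper, and I would aim for a contradiction in the case that it is.

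The one computation needed is that $f$ vanishes on the sum ideal: since $f$ is a group homomorphism with $f(I)=0$ and $f(J)=0$, every element of $I+J$ has the form $a+b$ with $a\in I$, $b\in J$, so $f(a+b)=f(a)+f(b)=0$, and hence $f$ vanishes identically on $I+J$. With this in hand, suppose toward a contradiction that $J\not\subseteq I$. Then $I\subsetneq I+J$, so the minimality clause of Lemma~\ref{MinSuperIdeal}, applied to the ideal $I+J$ properly containing $I$, gives $I'\subseteq I+J$. Combined with the previous sentence this forces $f(I')=0$, contradicting the hypothesis $f(I')\neq 0$. Therefore $J\subseteq I+J=I$, which is what we want.

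I do not expect a serious obstacle here: the whole content is the observation that $f$ kills the sum ideal $I+J$, whereas any ideal strictly above $I$ must contain $I'$, on which $f$ is nonzero. The only point requiring a word of justification is that $I+J$ is genuinely an \emph{ideal} rather than merely a subgroup, so that the minimality of $I'$ (which is phrased for ideals $J$ with $I\subsetneq J$) applies; this is exactly where the Riesz interpolation/dimension-group hypothesis enters, through the fact that the sum of two ideals is again an ideal. Everything else is the additivity of $f$ and a single invocation of Lemma~\ref{MinSuperIdeal}.
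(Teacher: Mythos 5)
Your proof is correct and is essentially the paper's own argument: assume $J\not\subseteq I$, deduce $I'\subseteq I+J$ from the minimality of $I'$ (Lemma \ref{MinSuperIdeal}), and contradict $f(I')\neq 0$ via $f(I+J)=0$. Your added remark that interpolation is what guarantees $I+J$ is an ideal (so the minimality clause applies) is a justified point the paper leaves implicit.
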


\begin{proof}
If $J \not\subseteq I$ then $I \subsetneq I+J$, and so $I' \subseteq I+J$.
However, if $f(J)=0$ then $f(I+J)=0$, in contradiction to $f(I') \neq 0$.
\end{proof}

\begin{lemma}\alabel{zero-functional-technical-lemma}
Suppose that the ideals of $G$ satisfy the descending chain condition.
Let $J$ be a proper ideal of $G$ that is not the proper intersection of two ideals.
Let $f:G \to \R$ be a functional such that $f(J) = 0$ and $f(J') \neq 0$.
Let $I$ be an ideal such that $f(I) \neq 0$ but $f(K) = 0$ for all $K \subsetneq I$.
Then:
\begin{enumerate}
\item[(i)] $I$ has a unique maximal subideal, namely $I \cap J$.
\item[(ii)] Either $I=J'$ or we have $I\cap J \subsetneq J$ and $I+J=J'$.
\item[(iii)] If $I \neq J'$ then every proper subideal of $J'$ is a subideal of either $I$ or $J$.
\end{enumerate}
\end{lemma}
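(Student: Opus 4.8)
The engine of the whole argument is the preceding lemma: since $J$ is a proper ideal that is not the proper intersection of two ideals, with $f(J)=0$ and $f(J')\neq 0$, that lemma tells us every ideal $L$ with $f(L)=0$ is contained in $J$; equivalently, $L\not\subseteq J$ forces $f(L)\neq 0$. The plan is to keep this contrapositive as the main workhorse, and to combine it with two pieces of ambient structure: the ideals of $G$ form a distributive (hence modular) lattice under $+$ and $\cap$, and $f(A+B)=f(A)+f(B)$ because $(A+B)^+=A^++B^+$. I would then translate everything into covering relations in this lattice, using that $J\prec J'$ is a covering in which $J'$ is the \emph{unique} ideal covering $J$ (this is exactly Lemma~\ref{MinSuperIdeal} together with the definition of $J'$).

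For (i): first, $I\cap J\subseteq J$ gives $f(I\cap J)=0$, so $I\cap J\neq I$ because $f(I)\neq 0$; thus $I\cap J$ is a proper subideal of $I$. Conversely, any proper subideal $K\subsetneq I$ has $f(K)=0$ by hypothesis, hence $K\subseteq J$ by the preceding lemma, so $K\subseteq I\cap J$. Therefore $I\cap J$ is the unique maximal proper subideal of $I$, which is (i); in particular $I$ covers $I\cap J$.

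For (ii), I would assume $I\neq J'$. Since $f(I)\neq 0=f(J)$ we get $I\not\subseteq J$, so $J\subsetneq I+J$ and minimality of $J'$ yields $J'\subseteq I+J$. To obtain equality I would invoke perspectivity in the modular ideal lattice: the intervals $[I\cap J,I]$ and $[J,I+J]$ are isomorphic via $x\mapsto x+J$ and $y\mapsto y\cap I$. By (i) the first interval is a two-element chain, so $[J,I+J]$ is too, i.e. $I+J$ covers $J$; as $J'$ is the unique cover of $J$, this forces $I+J=J'$. Finally $I\cap J\subsetneq J$, for otherwise $J\subseteq I$ would make $I\cap J=J$ the maximal proper subideal of $I$ by (i), so $I$ would cover $J$ and hence equal $J'$, a contradiction.

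Part (iii) is where I expect the real difficulty. The natural opening move is distributivity: for a proper subideal $L\subsetneq J'=I+J$ one has $L=L\cap(I+J)=(L\cap I)+(L\cap J)$, with $f(L\cap J)=0$. If $L\cap I\subseteq J$ then $L\subseteq J$; otherwise $L\cap I\not\subseteq I\cap J$, and since $I\cap J$ is the unique maximal proper subideal of $I$ by (i), this forces $L\cap I=I$, i.e. $I\subseteq L$. So the statement reduces to showing that no ideal lies strictly between $I$ and $J'$ — equivalently, by perspectivity, that $I\cap J$ is covered by $J$. This co-maximality of $I$ in $J'$ is the crux, and it is the step I expect to be the main obstacle: it will not follow from the lattice structure alone, so I anticipate having to feed in the minimality of $I$ together with the preceding lemma once more, examining a hypothetical intermediate $K'=I+K$ with $I\cap J\subsetneq K\subsetneq J$ and extracting a contradiction from the behaviour of $f$ on $K$ and on the subideals of $J'$. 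Once that covering relation is secured, the distributive decomposition above immediately places every proper subideal of $J'$ inside $I$ or inside $J$.
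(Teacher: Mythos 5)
Your parts (i) and (ii) are correct. Part (i) is essentially verbatim the paper's argument. For (ii) the paper proceeds differently: instead of the diamond isomorphism, it assumes $I+J \neq J'$, deduces $I \cap J' = I \cap J$ from (i), and computes $J = (J'\cap J)+(J'\cap I) = J' \cap (J+I) = J'$ by distributivity, a contradiction; your perspectivity argument (the interval $[I\cap J,\, I]$ is a two-element chain by (i), hence so is $[J,\, I+J]$, and $J'$ is the unique cover of $J$) is an equally valid and arguably cleaner route to the same conclusion. Your handling of the subcase $I\cap J=J$ is also sound.

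The gap is exactly where you predicted, in (iii) --- and you should know that it is not a gap you could have closed, because part (iii) as stated is false. Your reduction is correct and establishes the true dichotomy: every proper subideal $L \subsetneq J'$ satisfies either $L \subseteq J$ or $I \subseteq L$. The remaining claim --- that no ideal lies strictly between $I$ and $J'$ --- does not follow from the hypotheses, and indeed fails. Take $G = (\Z,\Z^{\geq 0}) \oplus (\Z^2,\mathrm{lex})$, where $\Z^2$ carries the lexicographic (total) order; this is a dimension group, and since $0 \leq (a,0,0),(0,x,y) \leq (a,x,y)$ for positive elements, every ideal is a direct sum of ideals of the summands, so the ideal lattice is $0$, $I = \Z\oplus 0\oplus 0$, $M = 0\oplus 0\oplus \Z$, $K = \Z\oplus 0\oplus \Z$, $J = 0\oplus \Z\oplus \Z$, $J' = G$. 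Let $f(a,x,y) = a$. Then $J$ is proper and not a proper intersection (its unique cover is $J' = G$), $f(J) = 0 \neq f(J')$, and $I$ is minimal with $f(I) \neq 0$ (its only proper subideal is $0$); conclusions (i) and (ii) hold, yet $K$ is a proper subideal of $J'$ contained in neither $I$ nor $J$. Note $I \subseteq K$: this is precisely the case your covering claim would have to exclude, and it is precisely the case the paper's own proof silently skips --- the paper asserts ``$K\cap I \subsetneq I$'' with no justification, whereas here $K \cap I = I$. So your instinct that lattice structure plus the preceding lemma cannot secure the covering relation was right; what you could not prove, the paper did not prove either, and (iii) can only be rescued by adding hypotheses on $f$ reflecting the particular degeneracy functionals $f_{s,t}$ for which the lemma is invoked in the proof of Proposition \ref{linindep}. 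Verdict: (i) and (ii) are complete; (iii) is incomplete as written, but the missing step coincides with a genuine error in the paper's own argument.
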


\begin{proof}
(i) Let $K \subsetneq I$.
Then $f(K)=0$ so by the last lemma, $K \subseteq J$.
Hence every ideal of $I$ is contained in $I \cap J$.
Moreover, $I \cap J \neq I$, since $f$ vanishes on $I \cap J$ but not on $I$.

(ii) We have $J \subsetneq I+J$ and so $J' \subseteq I+J$.
Consider the case that $I\neq J'$.
If $I\cap J = J$ then $J \subseteq I$ so $J' \subsetneq I$.
But then $f$ does not vanish on the proper ideal $J'$ of $I$, a contradiction, so that $I\cap J \subsetneq J$.
If $I+J \neq J'$ then $I \not\subseteq J'$ and so $I\cap J' \subsetneq I$.
It follows from (i) that $I \cap J' = I \cap J$, and by using distributivity,
\[ J = (J'\cap J) + (I \cap J) = (J'\cap J) + (J'\cap I) = J' \cap (J+I) = J', \]
a contradiction.

(iii) Suppose that $I \neq J'$ and $K$ is a proper subideal of $J'$ such that $K \not\subseteq I$ and $K \not\subseteq J$.
We have first that $J \subsetneq J+K$ and so $J+K=J'$.
Secondly, $K\cap I \subsetneq I$ and so $I\cap K \subseteq I \cap J$ by (i).
Using distributivity, we have
\[ I \cap J = (I \cap J) + (I \cap K) = I \cap (J + K) = I \cap J', \]
and by (ii) we know that $I \cap J' = I \cap (I+J) = I$, which yields $I \cap J = I$, a contradiction to (ii).
\end{proof}

We have already described how to get functionals $\tau_1,\dots,\tau_k$ that induce every extreme state on every ideal.
Let us pick out the ideals of $G$ that are not the proper intersection of two ideals, and label them
\[ J_1,\dots, J_\ell. \]
For $s=1,\dots,\ell$, we use $\psi_{J_s}:\degen{J_s} \to \R^{a_s}$ (where $a_s = k_{J_s}$) to define functionals $f_{s,1},\dots,f_{s,a_s}$ from $G$ to $\R$.
Therefore, we have
\begin{equation} J_s = \bigcap \{\ker \tau_j: \tau_j|_{J_s'} \geq 0 \text{ and } \tau_j(J_s) = 0\} \cap \ker f_{s,1} \cap \cdots \cap \ker f_{s,a_s}. \alabel{J-intersectioneqn} \end{equation}

\begin{prop}\alabel{linindep}
The functionals 
\[ \tau_1, \dots, \tau_k, f_{1,1},\dots,f_{1,a_1},\dots,f_{\ell,1},\dots,f_{\ell,a_\ell} \]
are linearly independent over $\R$.
\end{prop}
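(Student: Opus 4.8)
The plan is to prove the equivalent statement that no nontrivial linear relation
\[ \sum_{i=1}^k c_i \tau_i + \sum_{s=1}^\ell \sum_{i=1}^{a_s} d_{s,i} f_{s,i} = 0 \]
can hold among these $\R$-valued functionals; equivalently, that the induced map $\Phi := (\tau_1,\dots,\tau_k,f_{1,1},\dots,f_{\ell,a_\ell}):G \to \R^N$, with $N := k + \sum_{s=1}^\ell a_s$, has image spanning $\R^N$. First I would reduce to the case of an ordered real vector space: by Proposition \ref{Qtensor}, applied over $\Q$ and then extending scalars to $\R$, the passage to $G \tens_\Z \R$ preserves the lattice of ideals, the extreme states on each ideal, the restriction maps between them, and the degeneracy quotients; moreover an $\R$-valued functional on $G$ extends uniquely to $G \tens_\Z \R$, and the relation above holds on $G$ if and only if it holds on $G \tens_\Z \R$. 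Hence I may assume throughout that $G$ is an ordered real vector space, every ideal is a subspace, and all of the functionals are linear.

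The heart of the argument is to separate the two families according to how each localizes in the ideal lattice. The degeneracy functionals are controlled on the subgroups $K_{J_s}$: by the defining property \eqref{KI-definition} of $K_{J_s}$, every positive functional on an ideal containing $J_s$ that vanishes on $J_s$ must already vanish on $K_{J_s}$. In particular each $\tau_j$ occurring in \eqref{J-intersectioneqn}, namely those with $\tau_j(J_s) = 0$ and $\tau_j|_{J_s'} \geq 0$, restricts to $0$ on $K_{J_s}$, whereas the restrictions $f_{s,1}|_{K_{J_s}},\dots,f_{s,a_s}|_{K_{J_s}}$ are linearly independent, since $f_{s,i}$ is by construction the $i$-th coordinate of the embedding $\psi_{J_s}:\degen{J_s} = K_{J_s}/J_s \to \R^{a_s}$, whose image spans $\R^{a_s}$. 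The state functionals, dually, are controlled on the ideals: restricted to any ideal $I$, those $\tau_i$ that are positive on $I$ are scalar multiples of distinct extreme states of $I$, hence pairwise non-proportional and so linearly independent there, by Lemma \ref{RestrictionOneToOne} and the construction of the $\tau_i$ via Corollary \ref{GlobalStates}. I would therefore run an induction on the number of ideals of $G$, at each stage peeling off either the degeneracy sitting over a minimal element $J_s$ of the family $J_1,\dots,J_\ell$, which eliminates the corresponding coefficients $d_{s,i}$, or a minimal nonzero ideal together with the functionals that do not vanish on it, and then applying the inductive hypothesis to the appropriate quotient, whose ideal lattice and functional data are strictly smaller.

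The main obstacle is the bookkeeping of cross terms, and in particular the behaviour of a state functional $\tau_j$ away from its maximal positivity ideal $I_{\tau_j}$: restricted to $K_{J_s}$, a $\tau_j$ with $\tau_j(J_s) \neq 0$, or one that is not positive on $J_s'$, need not vanish, and likewise an $f_{t,i}$ with $t \neq s$ need not vanish on $K_{J_s}$. To keep these under control I would use the detailed description of the ideals surrounding each pair $J_s \subsetneq J_s'$ furnished by Lemma \ref{MinSuperIdeal} and Lemma \ref{zero-functional-technical-lemma}: choosing $J_s$ minimal among $J_1,\dots,J_\ell$ keeps $J_s'$ correspondingly low in the lattice, which limits which functionals can be nonzero on $K_{J_s} \subseteq J_s'$, and the structural lemma identifies precisely the ideals on which a given $\tau_j$ first becomes nonzero. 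Once the coefficients $d_{s,i}$ have been eliminated in this way, the surviving relation involves only the $\tau_i$, and its triviality follows from the classical linear independence of distinct extreme states on each simple subquotient, propagated up the lattice by the induction. I expect this cross-term control, rather than either localization statement taken in isolation, to be the genuinely delicate part of the proof.
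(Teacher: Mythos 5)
Your proposal has two genuine gaps. First, the opening reduction is unjustified and in fact contradicted by the paper itself: Proposition \ref{Qtensor} is stated (and true) only for $G \tens_\Z \Q$, and your claim that passing to $G \tens_\Z \R$ ``preserves the lattice of ideals, the extreme states on each ideal, \dots\ and the degeneracy quotients'' fails precisely for the degeneracy quotients --- Example \ref{TensorEx} is an explicit counterexample, where $\degen{\{0\}} = 0$ for the irrational-rotation group $G$ but becomes one-dimensional after tensoring with $\R$, since the kernel of the unique state acquires a nontrivial subspace. Linear independence of the extended functionals on $G \tens_\Z \R$ is indeed equivalent to linear independence on $G$, so the extension itself is harmless; but your argument then uses the \emph{order-theoretic} structure of the tensored group (its ideal lattice, its extreme states, Lemma \ref{zero-functional-technical-lemma} applied there), none of which has been shown to match that of $G$. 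The paper sidesteps this entirely: its proof of the proposition works with $G$ directly, and $\Q$-tensoring appears only inside the state lemmas. (A smaller unjustified claim of the same kind: $K_{J_s} \subseteq J_s'$ need not hold, since nothing guarantees a positive functional on $J_s'$ vanishing on $J_s$ exists, so the containment you invoke to limit which functionals survive on $K_{J_s}$ is not available in general.)

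Second, and decisively: the cross-term elimination, which you correctly identify as ``the genuinely delicate part,'' is only announced, never carried out, and the inductive scheme you sketch would not carry it. An induction ``on the number of ideals'' that peels off a minimal ideal and passes to the quotient cannot be applied to a fixed linear relation: the functionals that are nonzero on the peeled-off ideal do not descend to the quotient, so the relation induces no relation among the ``strictly smaller'' data. The paper instead runs its induction over the ideals of the fixed group $G$, enumerated compatibly with an auxiliary partial order $\prec$ generated by $\subsetneq$ together with $I \prec' J_s$ whenever $I$ is a proper intersection of two ideals and a maximal subideal of $J_s'$ --- crucially, such $I$ need not be contained in $J_s$, which is exactly why induction on inclusion alone (or on ideal count) is insufficient, and why the paper must separately prove that $\prec$ is antisymmetric. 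Moreover, eliminating $d_{s,t}$ in the hardest configuration --- where the relation first becomes visible on $I = J_{s'}$ with $s' \neq s$ --- requires the splitting argument on $J_s' = J_s + J_{s'}$: one defines $f$ vanishing on $J_s$ and $g$ vanishing on $J_{s'}$ with $f + g = \tau_1|_{J_s'}$ and uses extremality to force $\tau_1|_{J_{s'}} = 0$ before restricting the relation to $J_s'$. Choosing ``$J_s$ minimal among $J_1,\dots,J_\ell$'' does not substitute for this, since the interaction between $J_s$ and $J_{s'}$ happens inside $J_s'$, above both. What you have is a correct statement of the two localization facts and an accurate diagnosis of where the difficulty lies; the proof of exactly that difficult part is what is missing.
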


\begin{proof}
Suppose that
\begin{equation} c_1 \tau_1 + \cdots + c_k \tau_k + d_{1,1}f_{1,1} + \cdots + d_{\ell,a_\ell} f_{\ell,a_\ell} = 0 \alabel{homogeqn} \end{equation}
for some scalars $c_1, \dots, c_k, d_{1,1}, \dots, d_{\ell,a_\ell} \in \R$.

We will show, by induction, that for the ideal $I \subseteq G$, we have
\begin{align*}
\tau_i|_I \neq 0 &\Rightarrow c_i = 0, \text{ and} \\
f_{s,t}|_I \neq 0 &\Rightarrow d_{s,t} = 0.
\end{align*}
Since $\tau_i|_G \neq 0$ and $f_{s,t}|_G \neq 0$ for all $i,s,t$, this inductive argument will prove linear independence.

We need to be careful about the order in which we enumerate the ideals for the induction process.
We want to enumerate them such that when doing the step for an ideal $I$, we may assume the induction hypothesis for all proper subideals of $I$; additionally, if $I=J_s$ and another ideal $J$ is the proper intersection of two ideals and is maximal in $J_s'$, then we may assume that the induction hypothesis holds for $J$.
Some explanation is required for why this is possible.

Let $\prec'$ denote the relation on the ideals of $G$ given by $I \prec' J_s$ if $I$ is the proper intersection of two ideals and $I$ is a maximal subideal of $J_s'$; $\prec'$ is clearly a (strict) pre-order.
Let $\prec$ denote the (strict) pre-order relation generated by $\prec'$ and $\subsetneq$.
We want to enumerate in a non-decreasing order with respect to $\prec$, and to show that this is possible, we need the following.

\begin{claim}
The pre-order $\prec$ is antisymmetric, and therefore a partial order.
\end{claim}

\begin{proof}[Proof of claim.]
Notice that if $I \prec' J$ and $J \subsetneq K$ then since $J'$ is the minimum superideal of $J$, we must have $J' \subseteq K$.
Since $I \prec' J$ implies that $I \subsetneq J'$, we must have $I \subsetneq K$.

It follows that if $I \prec J$ then either $I \subsetneq J$ or $I \subseteq K \prec' J$ for some ideal $K$.
In either case, it cannot happen that $I = J$.

Now, if $I \preccurlyeq J$ and $J \preccurlyeq I$ then either $I = J$ or else $I \prec J \prec I$, in which case $I \prec I$, which contradicts what was just shown.
\end{proof}

Let us now do the inductive step, where we may assume that for any ideal $J \prec I$, the inductive hypothesis holds for $J$.

For each $i$, if $\tau_i|_I = 0$ then there is nothing to prove; if $\tau_i|_K \neq 0$ for some $K \subsetneq I$ then $c_i = 0$ by the induction hypothesis.
The only case of interest, then, is that $\tau_i|_I \neq 0$ but $\tau_i|_K = 0$ for all $K \subsetneq I$.
Recall that $I_{\tau_i}$ is the maximal ideal upon which $\tau_i$ is positive.
If $I \not\subseteq I_{\tau_i}$ then $I \cap I_{\tau_i} \subsetneq I$ and so $\tau_i|_{I_{\tau_i} \cap I} = 0$ by induction.
Hence, there exists $g:G \to \R$ such that $g|_I = 0$ but $g|_{I_{\tau_i}} = \tau_i|_{I_{\tau_i}}$.
Then $I_{\tau_i} \neq I+I_{\tau_i}$ but $g \geq 0$ on $I+I_{\tau_i}$ (since $g = 0$ on $I$ and $g \geq 0$ on $I_{\tau_i}$), which contradicts Corollary \ref{GlobalStates}.
Hence, we must have $I \subseteq I_{\tau_i}$, and thus $\tau_i|_I$ is a (nonzero) scalar multiple of some $\tau \in \eb S(I)$.

Likewise, we have a trichotomy for each $f_{s,t}$: the only case where there is something to prove is the case that $f_{s,t}|_I \neq 0$ but $f_{s,t}|_K=0$ for all $K \subsetneq I$.
For this case, we look to Lemma \ref{zero-functional-technical-lemma}: by (i), $I \cap J_s$ is the unique maximal subideal of $I$.
By (ii), we can break our analysis into two cases, and proceed to show in each case that $d_{s,t}=0$ for all $t=1,\dots,a_s$.
\begin{enumerate}
\item[Case 1.] $I = J_s'$. \\
In this case, we cannot have that $f_{s',t'}|_I \neq 0$ but $f_{s',t'}|_K = 0$ for all $K \subsetneq I$, for some $s' \neq s$.
Certainly, if we did have this, then $J_{s'} \cap I$ would be the unique maximal subideal of $I$, and so $J_s = J_{s'} \cap I$, expressing $J_s$ as the proper intersection of ideals.

Restricting \eqref{homogeqn} to $\bigcap \{\ker \tau_i: \tau_i(J_s) = 0\}$ gives
\[ \left(d_{s,1} f_{s,1} + \cdots + d_{s,a_s} f_{s,a_s}\right) \big|_{\bigcap \{\ker \tau_i: \tau_i|_{J_s'} \geq 0 \text{ and } \tau_i(J_i)=0\}} = 0. \]
If the scalars $d_{s,t}$ are not all nonzero, then this amounts to a linear dependence of the components of $\psi_{J_s}$.
However, since the range of $\psi_{J_s}$ spans $\R^{a_s}$, the components are linearly dependent, so $d_{s,t} = 0$ for all $t$.

\item[Case 2.] $I\neq J_s'$. \\
In this case, by Lemma \ref{zero-functional-technical-lemma} (iii), it follows that either $J_s \prec I$ or else $I = J_{s'}$ for some $s'$.

Let us first handle the case that $I=J_{s'}$ for some $s'$.
Assume (by reordering indices) that $\tau_1, \dots, \tau_b$ are nonzero on $J_s$, $\tau_{b+1},\dots,\tau_{b'}$ are nonzero on $J_{s'}$, $\tau_1,\dots,\tau_{b'}$ are all zero on $J_{s'}\cap J_{s}$ (and therefore on any proper subideal of either $J_s$ or $J_{s'}$), and for $i=b+1,\dots,k$, either $\tau_i|_{J_s'} =0$ or $c_i=0$.

Let us first show that $\tau_1|_{J_{s'}} = 0$.
As argued above, since $\tau_1|_K = 0$ for all $K \subsetneq J_s$, we have $\tau_1|_{J_s} \geq 0$.
Likewise, we have $\tau_1|_{J_{s'}} \geq 0$.
Consequently, $\tau_1|_{J_s'} \geq 0$, yet it is nonzero, so it is a scalar multiple of an extreme state.

We may define $f:J'_s \to \R$ by $f|_{J_s} = 0$ and $f|_{J_{s'}} = \tau_1|_{J_{s'}}$.
Likewise, we may define $g:J'_s \to \R$ by $g|_{J_s'} = \tau_1|_{J_{s'}}$ and $g|_{J_s} = 0$.
Since $J_s' = J_s + J_{s'}$, we have $f+g = \tau_1|_{J_s'}$.
If $\tau_1|_{J_{s'}} \neq 0$ then this would contradict the fact that $\tau_1|_{J_s'}$ is a scalar multiple of an extreme state.
Therefore, $\tau_1|_{J_{s'}} = 0$.
Likewise, $\tau_i|_{J_{s'}} = 0$ for $i=1,\dots,b$ and $\tau_i|_{J_s} = 0$ for $i=b+1,\dots,b'$.

The equation \eqref{homogeqn}, restricted to $J_s'$, becomes
\begin{align*}
&\left(c_1\tau_1 + \cdots + c_b\tau_b + d_{s',1} f_{s',1} + \cdots + d_{s',a_{s'}} f_{s',a_{s'}}\right)\big|_{J_s'} \\
&\quad= -\left(c_{b+1}\tau_{b+1} + \cdots + c_{b'}\tau_{b'} + d_{s,1}f_{s,1} + \cdots + d_{s,a_{s}} f_{s,a_{s}}\right)\big|_{J_s'}
\end{align*}
Let $g$ denote the functional on $J_{s'}$ that is given by each side of the above equation.
Notice that $g|_{J_s} = 0$ from the right-hand side and $g|_{J_{s'}}=0$ from the left-hand side.
Therefore, $g=0$.
From this, as in case 1, we can show that $d_{s,t}=0$ for all $t$.

Going back to the case that $I$ is not of the form $J_{s'}$, so that the induction hypothesis holds for $J_s$, in this situation, we can apply the same argument to $J_s'$ to show that $d_{s,t}=0$ for all $t$.
\end{enumerate}

Finally, knowing that $d_{s,t}=0$ for all $t$, when we now restrict \eqref{homogeqn} to $I$, we get
\[ \left(c_1\tau_1 + \cdots + c_k\tau_k\right)\big|_I = 0, \]
where for each $i$, either $\tau_i|_I=0$ or $c_i=0$ or $\tau_i|_I$ is a scalar multiple of an extreme state on $I$.
Since the extreme states of $I$ are linearly independent, it follows that $c_i=0$ whenever $\tau_i|_I \neq 0$.
\end{proof}

We are now set to prove Theorem \ref{RepresentationThm}.

\begin{proof}[Proof of Theorem \ref{RepresentationThm}]
This is achieved by relabelling the functionals
\[ \tau_1, \dots, \tau_k, f_{1,1},\dots,f_{1,a_1},\dots,f_{\ell,1},\dots,f_{\ell,a_\ell} \]
as $\phi_1,\dots,\phi_n$.
We already know that $\phi$ is one-to-one, and $\phi_1,\dots,\phi_n$ are linearly independent.

For each ideal $I$ of $G$, set
\[ S_I := \{i=1,\dots,n: \phi_i(I)\neq 0\}, \]
so that, by \eqref{J-intersectioneqn}, $I = I_{S_I}$.
Let $\mathcal{S}$ consist of each $S_I$ given by an ideal $I$.
Set
\[ P^\geq_{S_I} := \{i=1,\dots,n: \phi_i|_I \text{ is a positive functional}\}. \]
Then $P^\geq_{S_I} \supseteq S_I^c$, and $P^>_{S_I} = P^\geq_{S_I} \cap S_I$ consists of each $\tau_i$ that is a nonzero positive functional on $I$.

In light of the linear independence of $\phi_1,\dots,\phi_n$ and the choice of $\tau_i$'s, this means that the extreme states on $I$ are, up to positive scalar multiples, exactly the $\phi_i$ for which $i \in P^>_{S_I}$, and so (iv) follows by \cite[Theorem 1.4]{EffrosHandelmanShen}.
Item (ii) follows from (iv), since every positive element of $G$ is an order unit in the ideal it generates.
To see (v), note that the first part is a consequence of the fact that $\{ \phi_1,\dots,\phi_n \}$ is linearly independent, and the second part follows from the fact that the order units of the ideal $I$ separate the extreme states on $I$.

Finally, for two ideals $I$ and $J$ of $G$, we have
\[ S_{I+J} = \{i: \phi_i(I+J) \neq 0\} = \{i: \phi_i(I)\neq 0 \text{ or } \phi_i(J) \neq 0\} = S_I \cup S_J, \]
and
\[ S_{I \cap J} = \{i: \phi_i(I \cap J) \neq 0\} \subseteq S_I \cap S_J, \]
but also, since $I \cap J = I_{S_I} \cap I_{S_J}$ (and by (v)), we see that we must have $S_{I \cap J} \supseteq S_I \cap S_J$.
This shows that the map $I \to S_I$ is a lattice isomorphism, and that $\mathcal{S}$ is a lattice.
\end{proof}

\section{Necessary and sufficient conditions for interpolation}\alabel{Necessary-sufficient}

Theorem \ref{RepresentationThm} shows that, if a dimension group has finitely many ideals and finitely many extreme states on each ideal, and if, for each ideal $I$, $\degen{I}$ has countable rank, then the dimension group is isomorphic to a subgroup $G$ of $\R^n$, and the order can be described using some combinatorial data, which we shall give a name for now.

\begin{defn}
A \textbf{\triplename} is a triple $(\mathcal{S}, (P^>_S)_{S \in \mathcal{S}}, G)$ where $\mathcal{S}$ is a sublattice of $2^{\{1,\dots,n\}}$ containing $\emptyset$ and $\{ 1, \ldots , n\}$ , $P^>_S$ is a subset of $S$, and such that
\[
\{g \in G \ : \ g_i = 0 \ \forall i \not\in S\} \quad \textrm{spans} \quad  \{(x_1,\dots,x_n) \in \R^n \ : \ x_i = 0 \ \forall i \not\in S\}
\]
for each $S \in \mathcal{S}$.
\end{defn}

Associated to the \triplename $(\mathcal{S}, (P^>_S)_{S \in \mathcal{S}}, G)$ is the positive cone
\[
G^+ := \bigcup_{S \in \mathcal{S}} \{g \in G \ : \ g_i > 0 \ \forall i \in P^>_S, g_i=0 \ \forall i\not\in S\},
\]
making $G$ an ordered group.
Note that the cone on $(\mathcal{S}, (P^>_S)_{S \in \mathcal{S}}, \R^n)$ is
\[
\bigcup_{S \in \mathcal{S}} \{x \in \R^n \ : \ x_i > 0 \ \forall i \in P^>_S, x_i=0 \ \forall i\not\in S\},
\]
which is the cone generated by $G^+$.
Also, with this cone on $\R^n$, the inclusion $G \subseteq \R^n$ is an order embedding.

In this section, we will describe necessary and sufficient conditions on a \triplename \ $(\mathcal{S}, (P_S^>)_{S\in \mathcal{S}}, G)$ in order for $(G, G^+)$ to have Riesz interpolation 
The main result of this section is the following.

\begin{thm}\alabel{ConditionsThm}
Let $(\mathcal{S}, (P^>_S)_{S \in \mathcal{S}}, G)$ be a \triplename.  
Then $(G,G^+)$ is an unperforated ordered group, the ideals of which are
\[
I_S := \{ g\in G \ : \ g_i = 0 \ \forall i \notin S\}
\]
for $S\in\mathcal{S}$.

$(G, G^+)$ has Riesz interpolation if and only if the following conditions hold.
\begin{enumerate}
\item[(i)]  Letting $P_S^\geq = P_S^> \cup S^c$ for all $S\in \mathcal{S}$, we have, for $S_1,S_2 \in \mathcal{S}$,
\[ P^\geq_{S_1 \cup S_2} = P^\geq_{S_1} \cap P^\geq_{S_2}. \]
\item[(ii)]  For $S_1,S_2 \in \mathcal{S}$, if $S_1 \subsetneq S_2$ then
\[ P^>_{S_2} \not\subseteq S_1. \]
\item[(iii)]  For every pair $S_1,S_2 \in \mathcal{S}$, we have
\[ I_{S_1}+I_{S_2} = I_{S_1\cup S_2}. \]
\item[(iv)]  For every pair $S_1,S_2 \in \mathcal{S}$ for which $S_1$ is a maximal proper subset of $S_2$, we either have:
\begin{enumerate}
\item[(a)] If $S_2 \setminus S_1 = \{i_1,\dots,i_\ell\}$ then
\[ \{(x_{i_1},\dots,x_{i_\ell}): (x_1,\dots,x_n) \in I_{S_2}\} \]
is dense in $\R^{\ell}$.
\item[(b)] $S_2 = S_1 \dunion \{k\}$, and for any $T \in \mathcal{S}$ which contains $S_2$, either $k \not\in P^>_T$ or $T \setminus \{k\} \in \mathcal{S}$.
\end{enumerate}
\end{enumerate}
\end{thm}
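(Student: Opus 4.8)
The plan is to separate the two unconditional structural assertions from the interpolation criterion, and to treat the latter by passing through a purely real, convex-geometric model before descending to the subgroup $G$. Throughout I write $x \leq y$ for $y - x \in G^+$, and I use the abbreviation $P^\geq_S = P^>_S \cup S^c$ from the statement. The unconditional claims I would dispatch first. Unperforation is immediate, since multiplication by a positive integer preserves the sign pattern of every coordinate, so $ng \in G^+$ places $g$ in the very same piece $\{x : x_i > 0\ \forall i \in P^>_S,\ x_i = 0\ \forall i \notin S\}$. That $G^+$ is a strict, directed cone and that its ideals are exactly the $I_S$ I would verify directly from the spanning axiom: each $I_S$ is visibly an order-convex directed subgroup, while conversely any ideal is determined by the set of coordinate functionals vanishing on it, which by spanning must cut out some member of $\mathcal{S}$; this identifies the lattice of ideals with $\mathcal{S}$.

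For the forward implication (interpolation $\Rightarrow$ (i)--(iv)), observe that since unperforation is automatic, interpolation makes $G$ a dimension group, so the ideals form a distributive lattice and $(I+J)^+ = I^+ + J^+$. Condition (iii) is then this Fuchs identity read through the isomorphism $S \mapsto I_S$. Condition (i) I would obtain from the restriction behaviour of extreme states: once the indices in $P^\geq_S$ are identified with the positive coordinate functionals on $I_S$, Lemmas~\ref{StateRestriction}, \ref{RestrictionOneToOne} and \ref{StatePullBack} (equivalently Corollary~\ref{GlobalStates}) say precisely that a functional is positive on $I_{S_1 \cup S_2} = I_{S_1} + I_{S_2}$ if and only if it is positive on each of $I_{S_1}$ and $I_{S_2}$. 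Conditions (ii) and (iv) I would prove by contraposition: assuming the combinatorial failure I would exhibit an explicit quadruple $a_1, a_2 \leq b_1, b_2$ admitting no interpolant in $G$---for (ii) using an order unit of $I_{S_2}$ whose strict positivity is entirely absorbed into the smaller $S_1$, and for (iv) using elements whose only candidate interpolants would require a group element approximating a prescribed point in the degenerate directions $S_2 \setminus S_1$.

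The reverse implication ((i)--(iv) $\Rightarrow$ interpolation) is the heart of the matter, and I would carry it out in two stages. Stage one is the real model: I claim $(\mathcal{S},(P^>_S),\R^n)$ has interpolation whenever (i)--(iii) hold. Given $a_1,a_2 \leq b_1,b_2$ in $\R^n$, I would construct an interpolant while committing to a single witness set $S$ for each required inequality; distributivity of $\mathcal{S}$ together with the $P^\geq$-identity of (i) guarantees that these witness sets can be chosen compatibly, condition (iii) keeps the resulting supports inside $\mathcal{S}$, and (ii) ensures the strict-positivity demands are simultaneously satisfiable. Over $\R$ the density hypothesis (iv)(a) is automatic, so nothing more is needed at this stage. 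Stage two is the descent: from a real interpolant $z_0$ I must produce $z \in G$ realizing the same order relations. The relations $z - a_j \in G^+$ and $b_k - z \in G^+$ impose open (strict) constraints on some coordinates and rigid equalities---$z_i$ pinned to $(a_j)_i$ or $(b_k)_i$---on others. The pinned values already come from group elements, so the genuine task is to place the free coordinates, which live in the degenerate directions $S_2 \setminus S_1$ of covering pairs; this is exactly what density condition (iv)(a) supplies, since the relevant $G \cap \R^{S}$ projects densely onto those coordinates and so meets the open target, while the rigid single-coordinate alternative (iv)(b) is handled instead by enlarging the witness set to delete the offending pin.

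I expect the descent to be the main obstacle, and in particular showing that (iv)(a) and (iv)(b) are precisely strong enough: when a covering pair $S_1 \subsetneq S_2$ fails to project densely, the interpolant cannot be perturbed into $G$ and must be located by a different combinatorial route, so making the two alternatives of (iv) both exhaustive and sufficient is delicate. I anticipate reducing a general interpolation problem to one supported on a single cover of the lattice $\mathcal{S}$, by an induction over covering pairs, so that at each step only one pair $(S_1,S_2)$ is active and exactly one clause of (iv) applies. The necessity of (iv) in the forward direction is the mirror-image difficulty, demanding test configurations finely tuned to the failure of density, and I would design these in tandem with the descent so that the same covering-pair reduction governs both directions.
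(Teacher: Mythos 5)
Your proposal correctly handles the easy parts (unperforation, identification of ideals with $I_S$, necessity of (iii) via the Fuchs identity, necessity of (i) from $(I_1+I_2)^+=I_1^++I_2^+$), but it leaves the two genuinely hard components unproved, and for one of them the route you sketch would not succeed. The necessity of (iv) cannot be obtained by "test configurations finely tuned to the failure of density." What must be shown is that interpolation forces a dichotomy on the simple quotient $I_{S_2}/I_{S_1}$: either its image in the $S_2\setminus S_1$ coordinates is dense, or the quotient is cyclic --- and cyclicity, combined with the spanning axiom of a \triplename, forces $\ell=1$. When $\ell\geq 2$ a non-dense spanning subgroup (e.g.\ $\Z^2\subset\R^2$) admits no obvious failing interpolation quadruple; such configurations are excluded not by a local counterexample but by a global theorem, which the paper imports as the combination of \cite[Proposition 14.3]{Goodearl:book} and \cite[Theorem 4.8]{GoodearlHandelman:Metric} (a simple dimension group with finitely many extreme states is either $(\Z,\mathbb{N})$ or has dense image under its states). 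Likewise, the necessity of the clause ``$k\notin P^>_T$ or $T\setminus\{k\}\in\mathcal{S}$'' in (iv)(b) is in the paper a direct-summand theorem for cyclic ideals (Proposition \ref{CyclicDichotomy}, generalizing \cite[Proposition 17.4]{Goodearl:book}), proved by a non-trivial interpolation argument in which $0,g\leq h,\,h-g$ with $h$ the cyclic generator forces any interpolant into $\{0,h\}$; your sketch contains no counterpart to either ingredient.

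On sufficiency, your two-stage plan (real interpolant, then descent to $G$) has roughly the right shape --- the paper's interpolant is likewise a group element matching the pinned coordinates plus a perturbation from a dense subgroup --- but you explicitly defer exactly the steps where the content lies. The pins come from four different sources $a^{(1)},a^{(2)},b^{(1)},b^{(2)}$, and producing a \emph{single} $c\in G$ matching all of them requires the minimal witness sets $T_{s,t}$, the key identity $T_{1,1}\cup T_{2,2}=T_{1,2}\cup T_{2,1}$ (proved from (i) and (ii)), and then condition (iii) to split $a^{(1)}-a^{(2)}=g^{(2)}-g^{(1)}$ and set $c=a^{(1)}+g^{(1)}=a^{(2)}+g^{(2)}$; nothing in your descent identifies this identity or this use of (iii). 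The remaining machinery --- iteratively shrinking $R=T_{1,1}\cap T_{1,2}\cap T_{2,1}\cap T_{2,2}$ to $\hat{R}$ using (iv)(b), and an induction over covering pairs using (iv)(a) to show $I_{\hat{R}}$ projects densely onto the $P^>_{\hat{R}}$ coordinates --- is gestured at (``enlarging the witness set,'' ``induction over covering pairs'') but conceded undone. Note also that your stage one is essentially superfluous: the real interpolant supplies nothing beyond the consistency of the open constraints, which the pinned/free analysis already yields, and indeed the paper's sufficiency proof is a single direct construction in $G$ with no real-model intermediary. As it stands the proposal is a plausible plan whose hard points --- necessity of (iv) and the combinatorial core of sufficiency --- are precisely the ones left open.
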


\begin{proof}[Proof of the first part.]
Let us first prove that $(G, G^+)$ is an unperforated ordered group, the ideals of which are as described above.  
We can then discuss separately the necessity and sufficiency of conditions (i)--(iv).

That $G$ is an unperforated ordered group is clear from the definition of $G^+$.  

It is also clear from the definition of $G^+$ that for $S\in \mathcal{S}$ the set $I_S$ is convex.  

To see that $I_S$ is directed, let $g^{(1)}, \ldots , g^{(k)}\in I_S$ be an $\R$-basis for $\{(x_1,\dots,x_n) \in \R^n \ : \ x_i = 0 \ \forall i \not\in S\}$, and pick $h^{(1)}, h^{(2)} \in I_S$.  
Let $\| g^{(1)}\| ,\ldots , \| g^{(k)}\|$ denote the Euclidean norms of $g^{(1)}, \ldots , g^{(k)}$.  
Then we can find some $c\in\{(x_1,\dots,x_n) \in \R^n \ : \ x_i = 0 \ \forall i \not\in S\}$ such that, if $\| x-c\| < \|g^{(1)}\| + \cdots + \| g^{(k)}\|$, then $x_i > h_i^{(t)}$ for $t = 1, 2$ and for all $i\in P_S^>$.

The element $c$ can be written $c = c_1g^{(1)} + \cdots c_kg^{(k)}$ for $c_1, \ldots , c_k\in \R$.  
Taking nearest integers $\lfloor c_1\rfloor , \ldots , \lfloor c_k\rfloor$, we obtain an element
\[
h' := \lfloor c_1\rfloor g^{(1)} + \cdots + \lfloor c_k\rfloor g^{(k)} \in I_S
\]
such that $h_i' - h_i^{(t)} > 0$ for $t = 1,2$ and for all $i\in P_S^>$.

Therefore $h' - h^{(1)}, h'-h^{(2)} \in G^+$, and so $I_S$ is directed.

Finally, let us pick an arbitrary convex and directed subgroup $K$ of $G$ and show that it equals $I_S$ for some $S\in \mathcal{S}$.

Let $S$ be the largest set in $\mathcal{S}$ such that, for all $k\in K$, $k_i = 0$ for all $i\notin S$.  
Then certainly $K\subseteq I_S$.  

For each $i\in S$, there is some $k^{(i)}\in K$ such that $k_i^{(i)} \neq 0$.  
Then we can find some integer combination $k'$ of $\{ k^{(i)} \ : \ i\in S\}$ such that $k_i' \neq 0$ for $i\in S$.

Then $k' \in K$ because $K$ is a group, and because $K$ is directed, there is some $k^+\in K \cap G^+$ such that $k^+ \geq k'$.  
By considering the definition of $G^+$, we see that we must have $k_i^+ > 0$ for all $i\in P_S^>$.  
But then, because $K$ is convex, we see that $I_S \subseteq K$ as well.  
Therefore $K = I_S$ as required.
\end{proof}

Now let us consider the necessity of the conditions (i)--(iv).

The necessity of condition (iii) follows from \cite[Proposition 2.4]{Goodearl:book}, which says that, in a dimension group, the sum of two ideals is also an ideal.

The necessity of condition (i) then follows easily, because
\[ I_{S_1 \cup S_2}^+ = (I_{S_1}+I_{S_2})^+ = I_{S_1}^++I_{S_2}^+. \]

To show the necessity of condition (ii), suppose for a contradiction that $P^>_{S_2} \subseteq S_1$.
By (i), this implies that $P^>_{S_1} = P^>_{S_2}$, so that if $g$ is an order unit of $I_{S_1}$ then $g$ is an order unit of $I_{S_2}$.
This contradicts the hypothesis that $S_1 \subsetneq S_2$.

The next proposition, combined with the fact that the quotient of a dimension group by an ideal is once again a dimension group, shows that if condition (iv) (a) does not hold then $I_{S_2}/I_{S_1}$ must be cyclic.

\begin{prop}
Let $G$ be a simple dimension group with order unit and finitely many extreme states $\tau_1,\dots,\tau_k$.
Then either $(G,G^+) \iso (\mathbb{Z},\mathbb{N})$ or
\[ \{(\tau_1(g),\dots,\tau_k(g)): g \in G\} \]
is dense in $\R^k$.
\end{prop}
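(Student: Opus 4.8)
The plan is to analyse the subgroup $H:=\tau(G)=\{(\tau_1(g),\dots,\tau_k(g)):g\in G\}$ of $\R^k$, where $\tau=(\tau_1,\dots,\tau_k)$, and to show that failure of density forces $(G,G^+)\iso(\Z,\mathbb N)$. First I record two easy reductions. Since $S(G,u)$ is a Choquet simplex (\cite[Theorem 10.17]{Goodearl:book}) with finitely many extreme points, it is an ordinary $(k-1)$-simplex, so $\tau_1,\dots,\tau_k$ are affinely independent; as $\tau_i(u)=1$ for each $i$, they are in fact $\R$-linearly independent. Consequently $H$ is not contained in any hyperplane through the origin, i.e.\ $H$ spans $\R^k$. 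Moreover, because $G$ is simple, every nonzero element of $G^+$ is an order unit, so by \cite[Theorem 1.4]{EffrosHandelmanShen} we have the key characterisation $g\in G^+\setminus\{0\}\iff\tau_i(g)>0$ for all $i$; that is, the order on $G$ is pulled back from the open positive orthant $P=\{x\in\R^k:x_i>0\ \forall i\}$. Thus the question is whether $H$ is dense.

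Suppose then that $H$ is \emph{not} dense. Writing the closure of a subgroup of $\R^k$ in its standard form $\overline{H}=V\oplus\Lambda$ (a real subspace $V$ plus a lattice $\Lambda$ in a complement), spanning of $H$ gives $V+\operatorname{span}\Lambda=\R^k$, so non-density forces $\Lambda\neq 0$. By Pontryagin duality for $\R^k$ there is a nonzero $\xi$ with $\xi\cdot H\subseteq\Z$; since $\xi\cdot H$ is a nonzero subgroup of $\R$ contained in $\Z$, after rescaling we may assume $\xi\cdot H=\Z$. Setting $f:=\xi\cdot\tau=\sum_i\xi_i\tau_i$, we obtain a surjective homomorphism $f:G\to\Z$ that is a real-linear combination of the extreme states.

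The crux is to prove that such an $f$ can exist only when $\overline{H}$ is a rank-one lattice, i.e.\ when $k=1$. The plan is to contradict Riesz interpolation by producing a valid datum $a_1,a_2\le b_1,b_2$ with \emph{no} interpolant. The mechanism, which I expect to be the main obstacle to make uniform, is to choose the four elements so that (i) the four differences $\tau(b_j)-\tau(a_i)$ all lie in $P$ (so the datum is valid), (ii) the associated open ``interpolant box'' $\prod_i(\max_i a,\min_j b)$ has $f$-width at most $1$ with \emph{integer} endpoints, so that $f$ of any putative interpolant would have to be a non-integer, which is impossible as $f(G)\subseteq\Z$; and (iii) $a_1,a_2$ are incomparable and $b_1,b_2$ are incomparable, which disposes of the equality (corner) cases. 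The prototype is the ordered group $\{(x,y)\in\R^2:x-y\in2\Z\}$ with the strict-orthant order, which fails interpolation on the datum $a_1=(1,1),\,a_2=(2,0)\le b_1=(4,2),\,b_2=(3,3)$: the interpolant box is $(2,3)\times(1,2)$, on which $f=(x-y)/2$ takes only values in the open interval $(0,1)$. The work is to realise a combinatorially identical configuration inside $G$ in general, using surjectivity of $f$ to prescribe the interlocking $f$-levels $f(a_1)=f(b_2)=0$, $f(a_2)=f(b_1)=1$, and using that $\tau(\ker f)$ spans the hyperplane $\xi^\perp$ (together with the freedom in the $V$- or extra-lattice directions) to place the $\tau$-values so as to make the box tight. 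This yields $k=1$ and hence $\overline{H}=H=\Z\beta$.

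It remains to upgrade $k=1$, $H=\Z\beta$ to $G\iso(\Z,\mathbb N)$. Rescaling, we may take $\tau=\tau_1:G\to\Z$ surjective with $\tau(u)\ge 1$ and $g\in G^+\setminus\{0\}\iff\tau(g)\ge 1$. I would then show $\ker\tau=0$ by one further interpolation instance: if $h\neq0$ had $\tau(h)=0$, pick $g_1$ with $\tau(g_1)=1$; then $0,h\le g_1,g_1+h$ is a valid datum, but any interpolant $z$ satisfies $\tau(z)\in\{0,1\}$, and in either case one of the relations $z\ge h$, $z\ge 0$, $z\le g_1$, $z\le g_1+h$ forces a difference that is a nonzero multiple-free ``infinitesimal'' positive element, i.e.\ forces $h=0$, a contradiction. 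Hence $\tau$ is injective, and by the positivity characterisation it is an order-isomorphism onto $(\Z,\mathbb N)$, so $G\iso(\Z,\mathbb N)$. I would close by remarking that the whole dichotomy may alternatively be quoted from the theory of simple dimension groups (a simple dimension group with order unit is cyclic or has dense image in $\operatorname{Aff}(S(G,u))\iso\R^k$), but that the interpolation-instance argument above keeps the proof self-contained; the one genuinely delicate point is the uniform construction of the fixed-point-free interpolation datum in the third paragraph.
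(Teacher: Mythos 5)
Your reductions are sound (the simplex structure of $S(G,u)$ giving linear independence of $\tau_1,\dots,\tau_k$, hence spanning of $H=\tau(G)$; the closed-subgroup structure theorem producing a character $\xi$ with $\xi\cdot H=\Z$; the characterization $G^+\setminus\{0\}=\{g:\tau_i(g)>0\ \forall i\}$ via \cite[Theorem 1.4]{EffrosHandelmanShen}), and your endgame for $k=1$ (killing $\ker\tau$ with the datum $0,h\le g_1,g_1+h$) is correct. But the proof has a genuine gap exactly where you flag it: the third paragraph, which carries the entire weight of the theorem, is only a prototype plus a plan. Worse, the specific mechanism you prescribe fails in natural cases. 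If $\xi$ lies in the closed dual cone (say $\xi_i\ge 0$ for all $i$, which can be forced on you when the lattice part of $\overline{H}$ has rank one, since then $\xi$ is unique up to scale), the interlocking levels $f(a_1)=f(b_2)=0$, $f(a_2)=f(b_1)=1$ are outright impossible: $b^{(2)}-a^{(2)}$ is strictly positive in every coordinate, so $f(b_2)>f(a_2)$. And when $H$ is discrete of full rank (e.g.\ $H=\Z^2$ with $\xi=(1,1)$), no valid datum has $f$-width $\le 1$ for that $\xi$ at all, since incomparable integer corners force each box side to have length $\ge 1$ and hence $f$-width $\ge 2$; one must instead choose a \emph{different} character (here $(1,0)$) or replace the ``$f$-window'' mechanism by avoidance of the whole closed subgroup $\overline{H}$. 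So the uniform construction requires a case analysis on the rank of the lattice summand of $\overline{H}$ (when the lattice rank is one, $\tau(\ker f)$ is actually \emph{dense} in $\xi^\perp$, not merely spanning, and the construction is easy; when the rank is $\ge 2$, density fails and the choice of character matters), and none of this is carried out. As it stands you have verified the theorem on examples and reduced it correctly at both ends, but the core implication ``non-density and $k\ge 2$ contradicts interpolation'' is unproven.

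For comparison: the paper does not attempt any of this. Its proof is a two-line citation, combining \cite[Proposition 14.3]{Goodearl:book} with \cite[Theorem 4.8]{GoodearlHandelman:Metric}, which together give precisely the dichotomy (cyclic, or dense image in $\operatorname{Aff}(S(G,u))\iso\R^k$). Your closing remark acknowledges this route; given that the delicate lemma you defer is essentially a re-proof of the Goodearl--Handelman density theorem, the self-contained version is a substantial project rather than a finishing touch, and in its current form the proposal should either quote those results (as the paper does) or supply the missing construction in full.
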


\begin{proof}
This is simply the combination of \cite[Proposition 14.3]{Goodearl:book} and \cite[Theorem 4.8]{GoodearlHandelman:Metric}.
\end{proof}

Now knowing that if condition (iv) (a) fails then $I_{S_2}/I_{S_1}$ is cyclic, the necessity of condition (iv) (b) comes from the following dichotomy, which generalizes \cite[Proposition 17.4]{Goodearl:book}.

\begin{prop}\alabel{CyclicDichotomy}
Let $G$ be a dimension group with an order unit and suppose that $I$ is a cyclic ideal of $G$.
If the unique state $\tau$ on $I$ extends to a positive functional on $G$ then $I$ is a direct summand of $G$, as an ordered group; that is, $G=I \dsum J$ for some ideal $J$.
\end{prop}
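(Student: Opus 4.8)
The plan is to construct an explicit order-preserving retraction $\rho \colon G \to I$ and take its kernel as the complementary ideal $J$. First I would record the structure of $I$: being cyclic and torsion-free, $(I, I^+) \iso (\mathbb{Z}, \mathbb{N})$, so $I = \mathbb{Z}p$ for a positive generator $p$, and the unique state $\tau$ satisfies $\tau(p) = 1$. Convexity of $I$ makes $p$ an atom of $G$: if $0 \leq x \leq p$ then $x \in I$ by convexity, hence $x \in \{0, p\}$. Let $\tilde\tau \colon G \to \R$ be the given positive extension of $\tau$, so that $\tilde\tau(p) = 1$.

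For $g \in G^+$ I would define
\[ \rho(g) := \max\{m \in \mathbb{Z} : mp \leq g\}. \]
This is well defined: the set is nonempty (it contains $0$) and bounded above, since $mp \leq g$ forces $m = \tilde\tau(mp) \leq \tilde\tau(g)$. This is the one place the extension hypothesis is used, and it is precisely what prevents infinitely many copies of the atom $p$ from fitting beneath a single element. The crucial property, and the step I expect to be the main obstacle, is additivity: $\rho(g+h) = \rho(g) + \rho(h)$ for $g, h \in G^+$. The inequality $\geq$ is immediate; for $\leq$ I would invoke the Riesz decomposition property (which follows from interpolation): given $mp \leq g + h$, decompose $mp = c_1 + c_2$ with $0 \leq c_1 \leq g$ and $0 \leq c_2 \leq h$, then use convexity of $I$ (each $c_i$ lies between $0$ and $mp \in I$) to conclude $c_1, c_2 \in I$, say $c_i = m_i p$ with $m_1 + m_2 = m$ and $m_i \leq \rho(g), \rho(h)$ respectively.

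Once additivity holds on $G^+$, I would extend $\rho$ to a group homomorphism $G \to \mathbb{Z}$ using directedness, $G = G^+ - G^+$. By construction $\rho$ is positive, its restriction $\rho|_I$ is the isomorphism $kp \mapsto k$ (so $\rho(p) = 1$), and, because the maximum defining $\rho(g)$ is attained, we have $\rho(g)p \leq g$, that is, $g - \rho(g)p \in G^+$ whenever $g \in G^+$.

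Finally I set $J := \ker \rho$. Then $I \cap J = \{0\}$ (as $\rho|_I$ is injective) and $I + J = G$ (since $g - \rho(g)p \in J$ for every $g$), giving $G = I \dsum J$ as groups. To upgrade this to a direct sum of ordered groups I would verify $G^+ = I^+ \dsum J^+$: the nontrivial inclusion says that $g \in G^+$ forces $\rho(g) \geq 0$ (positivity of $\rho$) together with $g - \rho(g)p \in G^+$ (attainment of the maximum, as noted above). Convexity and directedness of $J$ then follow routinely from positivity of $\rho$, confirming that $J$ is an ideal and hence $G = I \dsum J$ as required.
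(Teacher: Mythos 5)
Your proof is correct, and it takes a genuinely different route from the paper's. The paper stays within the state-theoretic framework of Section \ref{Reps}: Lemmas \ref{StateRestriction} and \ref{RestrictionOneToOne} yield a unique extreme state $\tau'$ on $G$ restricting to a multiple of $\tau$, with every other extreme state vanishing on $I$; an element $g = -mu + nh$ is then chosen with $\tau'(g) \in [0, \tau'(h)/2)$ (splitting into the cases $\tau'(u)$ rational or irrational), the relations $0, g \le h, h-g$ are verified state-by-state using \cite[Theorem 1.4]{EffrosHandelmanShen}, and interpolation together with atomicity of the generator $h$ (any interpolant $z$ with $0 \le z \le h$ is $0$ or $h$) forces $g \le 0$ and $\tau'(g) = 0$, after which the complement is the ideal generated by $-g$. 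You instead construct the complement as the kernel of an explicit positive retraction $\rho(g) = \max\{m \in \Z : mp \le g\}$, using the extension hypothesis exactly once, to bound this set---a correct isolation of its role, since in lexicographic $\Z^2$, where the state on the small ideal has no positive extension, the set is unbounded and the conclusion fails---while Riesz decomposition plus convexity of $I$ gives additivity of $\rho$ on $G^+$. Your argument is more elementary and self-contained: it bypasses the extreme-state extension lemmas, the rational/irrational dichotomy, and the order-unit criterion, and it exhibits the projection $G \to I$ explicitly, so the ordered splitting $G^+ = I^+ + J^+$ falls out of attainment of the maximum; the paper's proof, in exchange, reuses machinery already established and needs no decomposition property beyond interpolation itself. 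Two minor points: the identification $(I, I^+) \iso (\Z, \mathbb{N})$ requires unperforation of $I$ (ideals of dimension groups are again dimension groups), not torsion-freeness alone---the cone generated by $\{2p, 3p\}$ is strict and generating in $\Z p$ but perforated---and directedness of $J$ deserves one recorded line: given $j_1, j_2 \in J$ and $y \ge j_1, j_2$, the element $y - \rho(y)p$ lies in $J$ and dominates both, by applying the fact that $g - \rho(g)p \in G^+$ for $g \in G^+$ to $g = y - j_i$.
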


\begin{proof}
By Lemmas \ref{StateRestriction} and \ref{RestrictionOneToOne}, we see that there is a unique extreme state $\tau'$ on $G$, the restriction of which is a positive scalar multiple of $\tau$, and every other extreme state on $G$ is zero on $I$.

Let $h$ be the positive generator of $I$, and let $u$ be an order unit of $G$.
Then, by considering separately the case that $\tau'(u)$ is rational or irrational, we see that there exist $m > 0$ and $n \geq 0$ such that the element $g:=-mu+nh$ satisfies $\tau'(g) \in [0,\tau'(h)/2)$.
Let us show that
\[ \begin{array}{c}0 \\ g \end{array} \leq \begin{array}{c}h \\ h-g.\end{array} \]
Certainly, we already know that $0 \leq h$.
The other inequalities amount to showing that both $h-g$ and $h-2g$ are non-negative.
For any state $s$ on $G$, we have that either $s = \tau'$ or $s(I) = 0$.
In the first case,
\[ \tau'(h-g) > \tau'(h) - \tau'(h)/2 > 0, \]
and likewise,
\[ \tau'(h-2g) > \tau'(h) - 2\tau'(h)/2 > 0. \]
In the second case,
\[ s(h-g) = s(-g) = ms(u) > 0, \]
and likewise, $s(h-2g) > 0$.
By \cite[Theorem 1.4]{EffrosHandelmanShen}, we see that $h-g$ and $h-2g$ are in fact order units in $G$.

Since $G$ has Riesz interpolation, there must be some interpolant $z$ satisfying
\[ \begin{array}{c}0 \\ g \end{array} \leq z \leq \begin{array}{c}h \\ h-g.\end{array} \]
However, since $h$ is the generator of the ideal $I$ and $0 \leq z \leq h$, we must have either $z=0$ or $z=h$.
Since $g \leq z \leq h-g$, in either case, it follows that $g \leq 0$ and $\tau'(g)=0$.
Let $J$ be the ideal generated by $-g$; by the definition of $g$, we see that $G$ is the ideal generated by $I$ and $J$.
Yet, since $\tau'(-g)=0$, it follows that $h \not\in J$ and thus $I \cap J = \{0\}$.
Hence, $G=I\dsum J$ as required.
\end{proof}

Quotients of dimension groups by ideals are once again dimension groups, so the result of Proposition \ref{CyclicDichotomy} applies in particular to the cyclic subideal $I_{S\dunion \{ k\}}/I_S$ of $I_T/I_S$.  To say that the unique state on $I_{S\dunion \{ k\}}/I_S$ does not extend to $I_T/I_S$ means that $k\notin P_T^>$.  To say that $I_{S\dunion \{ k\}}/I_S$ is a direct summand in $I_T/I_S$ means that $T\setminus \{ k\} \in \mathcal{S}$ as well.

Therefore all of the conditions (i)--(iv) are necessary for $G$ to have Riesz interpolation.  
Let us now prove that they are also sufficient.

\begin{proof}[Proof of sufficiency of (i)--(iv).]
To show that $(G, G^+)$ has Riesz interpolation, choose elements $a^{(1)}, a^{(2)}, b^{(1)}, b^{(2)}\in G$ satisfying
\[
\begin{array}{l}
a^{(1)}\\
a^{(2)}
\end{array} \leq \begin{array}{l}
b^{(1)}\\
b^{(2)}
\end{array}.
\]

Say that $a^{(t)} = (a_1^{(t)}, \ldots , a_n^{(t)})$ and $b^{(t)} = (b_1^{(t)},\ldots , b_n^{(t)})$.

For each pair $s, t\in \{ 1, 2\}$, define
\[
T_{s,t} := \bigcap \{ S\in \mathcal{S} \ : \ a_i^{(s)} = b_i^{(t)} \ \forall i \notin S\}.
\]

Clearly, $T_{s,t} \in \mathcal{S}$ and $a_i^{(s)} = b_i^{(t)}$ for all $i\notin T_{s,t}$.

Furthermore, we must have $b_i^{(t)} - a_i^{(s)} > 0$ for all $i\in P_{T_{s,t}}^>$.  Indeed, to see this, suppose otherwise.  
Then, because $b^{(t)}-a^{(s)}\in G^+$, there is some set $S\in \mathcal{S}$ such that $a_i^{(s)} = b_i^{(t)}$ for all $i\notin S$ (so that $T_{s,t}\subseteq S$), and $b_i^{(t)} - a_i^{(s)} > 0$ for all $i\in P_S^>$ (so that $T_{s,t} \subsetneq S$).  
But, since $b_i^{(t)} - a_i^{(s)} = 0$ for all $i\in S\setminus T_{s,t}$, we must have $P_S^> \subseteq T_{s,t}$, contradicting condition (ii).

Next let us show that 
\begin{equation}
T_{1,1}\cup T_{2,2} = T_{1,2} \cup T_{2,1}. \alabel{Sufficiency-Teqn}
\end{equation}
Indeed, if we let $(b^{(1)}+b^{(2)}-a^{(1)}-a^{(2)})_i$ denote the $i$th entry of $b^{(1)}+b^{(2)}-a^{(1)}-a^{(2)}$ and $T$ denote the smallest set in $\mathcal{S}$ such that $(b^{(1)}+b^{(2)}-a^{(1)}-a^{(2)})_i = 0$ for all $i\notin T$, then both sides of \eqref{Sufficiency-Teqn} are equal to $T$.  
By symmetry, it suffices to show that  $T_{1,1}\cup T_{2,2} = T$.
 
We have $b_i^{(1)}-a_i^{(1)}=0$ for all $i\notin T_{1,1}$ and $b_i^{(2)}-a_i^{(2)}=0$ for all $i\notin T_{2,2}$, so $(b^{(1)}+b^{(2)}-a^{(1)}-a^{(2)})_i = (b_i^{(1)}-a_i^{(1)}) + (b_i^{(2)}-a_i^{(2)}) = 0$ for all $i\notin T_{1,1}\cup T_{2,2}$.  
Moreover, $T_{1,1}\cup T_{2,2}\in \mathcal{S}$, so by the minimality of $T$, we must have $T\subseteq T_{1,1}\cup T_{2,2}$.

Suppose for a contradiction that $T\subsetneq T_{1,1}\cup T_{2,2}$.  
Since $b_i^{(s)} - a_i^{(s)} > 0$ for $i \in P^>_{s,s}$, 
\[ (b^{(1)}+b^{(2)}-a^{(1)}-a^{(2)})_i > 0 \]
for $i \in (P^>_{T_{1,1}} \cap T_{2,2}^c) \cup (P_{T_{2,2}}^> \cap T_{1,1}^c) \cup (P_{T_{1,1}}^> \cap P^>_{T_{2,2}})$.  
But this set is exactly $(P_{T_{1,1}}^\geq \cap P_{T_{2,2}}^\geq )\setminus (T_{1,1}^c\cap T_{2,2}^c)$.  
By condition (i), $(P_{T_{1,1}}^\geq \cap P_{T_{2,2}}^\geq ) = P_{T_{1,1}\cup T_{2,2}}^\geq$, so $(b^{(1)}+b^{(2)}-a^{(1)}-a^{(2)})_i > 0$ for all $i\in P_{T_{1,1}\cup T_{2,2}}^\geq \setminus (T_{1,1}^c\cap T_{2,2}^c) = P_{T_{1,1}\cup T_{2,2}}^>$.  
But $(b^{(1)}+b^{(2)}-a^{(1)}-a^{(2)})_i = 0$ for all $i\notin T$, which means that $P_{T_{1,1}\cup T_{2,2}}^> \subseteq T$, and this contradicts condition (ii).  
Therefore $T = T_{1,1} \cup T_{2,2}$.  

For $i\notin (T_{1,1}\cap T_{1,2})\cup (T_{2,1}\cap T_{2,2})$, this means that for some $t, t'$ we have
\[
a_i^{(1)} = b_i^{(t)} \quad \textrm{and} \quad a_i^{(2)} = b_i^{(t')}.
\]

If $t = t'$, then $a_i^{(1)} = b_i^{(t)} = b_i^{(t')} = a_i^{(2)}$.

If $t \neq t'$, then $i\in (T_{1,t}\cup T_{2,t'})^c$, which, by \eqref{Sufficiency-Teqn}, is equal to $(T_{1,t'}\cup T_{2,t})^c$.  
Then either $a_i^{(1)} = b_i^{(t')}$ or $a_i^{(2)} = b_i^{(t)}$; in either case, we have $a_i^{(1)} = a_i^{(2)}$.  
Therefore $a^{(1)} - a^{(2)} \in I_{(T_{1,1}\cap T_{1,2})\cup (T_{2,1}\cap T_{2,2})}$.

By condition (iii), there exist $g^{(1)}\in I_{T_{1,1}\cap T_{1,2}}$ and $g^{(2)}\in I_{T_{2,1}\cap T_{2,2}}$ such that
\[
a^{(1)} - a^{(2)} = g^{(2)} - g^{(1)}.
\]
Set $c := a^{(1)} + g^{(1)} = a^{(2)} + g^{(2)}$, and let $c_i$ denote the $i$th entry of $c$.  The $i$th entry of $g^{(1)}$ is $0$ for $i\notin T_{1,1}\cap T_{1,2}$, so
\[
 c_i = a_i^{(1)}
\]
whenever $a_i^{(1)} = b_i^{(t)}$ for some $t$.  Likewise, we have $c_i = a_i^{(2)}$ whenever $a_i^{(2)} = b_i^{(t)}$ for some $t$.

Let $R$ denote the set $T_{1,1}\cap T_{1,2}\cap T_{2,1} \cap T_{2,2} \in \mathcal{S}$.  
Suppose that we are in the situation of condition (iv) (b), and $R$ plays the role of $T$; that is, there exist a set $S$ and index $k$ such that $S\dunion \{ k\} \in \mathcal{S}$, $S\dunion \{ k\} \subseteq R$, and $I_{S\dunion \{ k\}}/I_S$ is cyclic.  Suppose further that $k\in P_R^>$.  Then, by condition (iv) (b), $R\setminus \{ k\}$ is also in $\mathcal{S}$.

We can repeat this procedure now with the set $R_2 := R \setminus \{ k\}$.  
If there exist $S_2$ and $k_2 \notin S_2$ such that $S_2,S_2\dunion \{ k_2\} \in \mathcal{S}$, $S_2\dunion \{ k_2\} \subseteq R_2$, $I_{S_2\dunion \{ k_2\}}/I_{S_2}$ is cyclic, and $k_2 \in P_{R_2}^>$, then $R_3 := R_2\setminus \{ k_2\}$ is also in $\mathcal{S}$.

Continuing in this fashion, we will eventually arrive at a set $\hat{R} \in \mathcal{S}$ such that, if $S, S \dunion \{k\}$ fall into condition (iv) (b) and $S\dunion \{ k\} \subseteq \hat{R}$, then $k\notin P_{\hat{R}}^>$.  

Using an induction argument and condition (iv) (a), we can show that, if $P^>_{\hat{R}} = \{i_1,\dots,i_\ell\}$ then
\[ \{(g_{i_1},\dots,g_{i_\ell}): g \in I_{\hat{R}}\} \]
is dense in $\R^\ell$.  

Note that for $i \in P^>_{\hat{R}}$, we have
\[ \begin{array}{c} a_i^{(1)} \\ a_i^{(2)} \end{array} < \begin{array}{c} b_i^{(1)} \\ b_i^{(2)} \end{array} \]
Therefore we can find $m \in I_{\hat{R}}$ such that the element $z := c + m$ satisfies
\[
\max \{ a_i^{(1)}, a_i^{(2)}\} < z_i < \min \{ b_i^{(1)}, b_i^{(2)}\}
\]
for $i \in P^>_{\hat{R}}$.
By our choice of $c$, we have
\[
z_i = \max \{ a_i^{(1)}, a_i^{(2)}\}
\]
for all $i$ satisfying
\[
\max \{ a_i^{(1)}, a_i^{(2)}\} = \min \{ b_i^{(1)}, b_i^{(2)}\} \quad \textrm{ or} \quad i \in R\setminus \hat{R}
\]

Therefore, we see that for $t=1,2$ we have $z - a^{(t)} \in I_{\hat{R}}^+$ and $b^{(t)} - z \in I_{R}^+$.  Therefore
\[
\begin{array}{l}
a^{(1)}\\
a^{(2)}
\end{array} \leq z \leq \begin{array}{l}
b^{(1)}\\
b^{(2)}
\end{array},
\]
as required.
\end{proof}

\section{Classification of finite dimensional ordered real vector spaces with Riesz interpolation\alabel{Rvspaces}}

Certain of the conditions in Theorem \ref{ConditionsThm} are automatically satisfied in the case that the group $G$ is all of $\R^n$.
We have eliminated these conditions to give the following corollary.

\begin{cor}\alabel{RvspacesCor}
Let $(\mathcal{S}, (P_S)_{S\in\mathcal{S}}, \R^n)$ be a \triplename \ and use $V^+$ to denote the associated positive cone.

Then $(\R^n, V^+)$ is an ordered real vector space, and it has Riesz interpolation if and only if the following two conditions hold:
\begin{enumerate}
\item[(i)]  Letting $P_S^\geq = P_S^> \cup S^c$ for all $S\in \mathcal{S}$, we have, for $S_1,S_2 \in \mathcal{S}$,
\[ P^\geq_{S_1 \cup S_2} = P^\geq_{S_1} \cap P^\geq_{S_2}; \text{ and}\]
\item[(ii)]  For $S_1,S_2 \in \mathcal{S}$, if $S_1 \subsetneq S_2$ then
\[ P^>_{S_2} \not\subseteq S_1. \]
\end{enumerate}
\end{cor}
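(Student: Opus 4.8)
The plan is to specialize Theorem \ref{ConditionsThm} to the case $G = \R^n$ and observe that, in this case, conditions (iii) and (iv) hold automatically, so that only (i) and (ii) remain. First I would dispatch the preliminary claim that $(\R^n, V^+)$ is an ordered real vector space: Theorem \ref{ConditionsThm} already supplies that $(\R^n, V^+)$ is an (unperforated) ordered group whose ideals are the $I_S$, and each of the sets $\{x \in \R^n : x_i > 0\ \forall i \in P^>_S,\ x_i = 0\ \forall i \notin S\}$ making up $V^+$ is visibly closed under multiplication by positive reals, hence so is their union $V^+$. As noted in Section \ref{Prelim}, for a real vector space this scalar-multiplication closure is exactly what promotes an ordered group to an ordered real vector space.

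Next I would verify that condition (iii) of Theorem \ref{ConditionsThm} is automatic here. When $G = \R^n$, the ideal $I_S = \{x \in \R^n : x_i = 0\ \forall i \notin S\}$ is precisely the coordinate subspace $\Span\{e_i : i \in S\}$. Therefore $I_{S_1} + I_{S_2} = \Span\{e_i : i \in S_1 \cup S_2\} = I_{S_1 \cup S_2}$, which is condition (iii). This is the one step that genuinely uses $G = \R^n$: for a general subgroup the sum of the two coordinate-type subgroups need not fill out the full coordinate subgroup indexed by $S_1 \cup S_2$, which is why (iii) must be imposed in Theorem \ref{ConditionsThm} but may be dropped here.

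Then I would verify condition (iv). Suppose $S_1$ is a maximal proper subset of $S_2$ and write $S_2 \setminus S_1 = \{i_1,\dots,i_\ell\}$. I claim alternative (a) always holds: since $G = \R^n$, the coordinates indexed by $S_2$ range freely over $I_{S_2}$, so the projection $\{(x_{i_1},\dots,x_{i_\ell}) : x \in I_{S_2}\}$ is all of $\R^\ell$ and in particular dense in $\R^\ell$. Thus (iv)(a) holds for every such pair, and one never needs to appeal to alternative (b).

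With (iii) and (iv) automatic, Theorem \ref{ConditionsThm} collapses to the statement that $(\R^n, V^+)$ has Riesz interpolation if and only if (i) and (ii) hold, which is exactly the corollary. I do not expect a genuine obstacle: the entire content is the observation that over the full space $\R^n$ the ideals $I_S$ are honest coordinate subspaces, which trivializes both the ideal-sum condition (iii) and the density/cyclicity dichotomy (iv), leaving only the purely combinatorial constraints (i) and (ii) on $\mathcal{S}$ and the sets $P^>_S$. The only point requiring a moment's care is correctly matching the hypothesis ``$S_1$ a maximal proper subset of $S_2$'' to alternative (a) and confirming the relevant projection is surjective rather than merely dense.
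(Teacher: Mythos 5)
Your proposal is correct and is precisely the paper's intended argument: the paper offers no written proof beyond the remark that certain conditions of Theorem \ref{ConditionsThm} ``are automatically satisfied in the case that the group $G$ is all of $\R^n$,'' and your verification that the $I_S$ are full coordinate subspaces (making (iii) trivial and the projection in (iv)(a) surjective, hence dense) together with the scalar-multiplication closure of $V^+$ fills in exactly those details.
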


By Theorem \ref{RepresentationThm} and the following remark, every finite dimensional ordered real vector space with interpolation occurs as one in the Corollary just described, where $(\mathcal{S}, (P^>_S)_{S \in \mathcal{S}})$ is determined up to permutation of the indices $1,\dots,n$.
This yields the following classification.

\begin{cor}\alabel{Rvspaces-Classification}
For $n \in \mathbb{N}$, let $D_n$ denote the set of all pairs $(\mathcal{S},(P^\geq_S)_{S \in \mathcal{S}})$ where
\begin{enumerate}
\item $\mathcal{S}$ is a sublattice of $2^{\{1,\dots,n\}}$ that contains $\emptyset$ and $\{1,\dots,n\}$;
\item $P^\geq_S$ is a subset of $\{1,\dots,n\}$ that properly contains $S^c$;
\item For $S_1,S_2 \in \mathcal{S}$,
\[ P^\geq_{S_1 \cup S_2} = P^\geq_{S_1} \cap P^\geq_{S_2}; \text{ and} \]
\item For $S_1,S_2 \in \mathcal{S}$, if $S_1 \subsetneq S_2$ then
\[ P^>_{S_2} \not\subseteq S_1. \]
\end{enumerate}
Define the equivalence relation $\equiv$ on $D_n$ by $(\mathcal{S},(P^\geq_S)_{S \in \mathcal{S}}) \equiv (\mathcal{S}',(Q^\geq_S)_{S \in \mathcal{S}'})$ if there exists a permutation $\sigma$ on $\{1,\dots,n\}$ such that
\[ \mathcal{S}' = \{\sigma(S): S \in \mathcal{S}\}, \]
and for each $S \in \mathcal{S}$,
\[ Q^\geq_{\sigma(S)} = \sigma(P^\geq_S). \]
Then the (ordered real vector space) isomorphism classes of $n$ dimensional ordered real vector spaces with Riesz interpolation are in one-to-one correspondence with the equivalence classes in $D_n$.
\end{cor}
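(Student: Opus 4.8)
The plan is to assemble Corollary \ref{Rvspaces-Classification} purely by combining the representation result (Theorem \ref{RepresentationThm} together with the remark handling the real vector space case) with the characterization of interpolation (Corollary \ref{RvspacesCor}), treating the statement as a bookkeeping matter about a well-defined bijection between two sets. First I would verify that the assignment is \emph{well-defined into the right target}: given an equivalence class in $D_n$, pick a representative $(\mathcal{S},(P^\geq_S)_{S\in\mathcal{S}})$, recover $P^>_S = P^\geq_S \cap S$, form the \triplename{} $(\mathcal{S},(P^>_S)_{S\in\mathcal{S}},\R^n)$, and equip $\R^n$ with the associated cone $V^+$. Conditions (1) and (3) ensure $\mathcal{S}$ is a sublattice containing $\emptyset$ and $\{1,\dots,n\}$, and conditions (3),(4) are exactly conditions (i),(ii) of Corollary \ref{RvspacesCor}, so $(\R^n,V^+)$ is an $n$-dimensional ordered real vector space with Riesz interpolation. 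One should also note that condition (2), that $P^\geq_S$ \emph{properly} contains $S^c$, is automatic for such spaces: it encodes that each nonzero ideal $I_S$ carries at least one nonzero extreme state, i.e.\ $P^>_S \neq \emptyset$ for $S \neq \emptyset$, which holds because a finite dimensional ordered real vector space with interpolation has an order unit on each ideal and hence a state.

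The heart of the argument is a \emph{surjectivity-and-injectivity} check for the induced map on equivalence classes. For surjectivity, I would invoke Theorem \ref{RepresentationThm} in the form guaranteed by remark item (i): if $V$ is an $n$-dimensional ordered real vector space with interpolation, then $\degen{I}$ is a finite dimensional real vector space for each ideal $I$, so we may take $\psi_I$ to be a vector space isomorphism, and the theorem produces an embedding $\phi:V\to\R^{n'}$ that is onto (since $\phi(V)$ spans $\R^{n'}$ and $V$ is an $n$-dimensional vector space mapped isomorphically onto its span, $n'=n$ and $\phi$ is a linear isomorphism), together with data $(\mathcal{S},(P^>_S)_{S\in\mathcal{S}})$ realizing $V^+$ as the associated cone. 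This exhibits $V$ as arising from a point of $D_n$, so the map is onto. For injectivity, I would use the uniqueness clause of Theorem \ref{RepresentationThm}: the number $n$, the lattice $\mathcal{S}$, and the subsets $P^>_S$ are determined by $V$ up to a permutation of $\{1,\dots,n\}$, and in the real vector space case the embedding $\phi$ itself is determined up to such a permutation. Hence two points of $D_n$ give isomorphic ordered real vector spaces precisely when they differ by a relabeling of coordinates, which is exactly the relation $\equiv$.

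The main obstacle, and the step deserving the most care, is reconciling the two directions of the permutation bookkeeping so that ``$\equiv$'' and ``ordered-vector-space isomorphism'' correspond cleanly in both directions. In one direction, if $(\mathcal{S},(P^\geq_S)) \equiv (\mathcal{S}',(Q^\geq_S))$ via $\sigma$, then the coordinate permutation $\sigma$ of $\R^n$ is a linear isomorphism carrying the cone of the first triple onto that of the second (since $\sigma$ sends each defining constraint $\{x_i>0\ \forall i\in P^>_S,\ x_i=0\ \forall i\notin S\}$ to the corresponding constraint for $\sigma(S)$ and $Q^>_{\sigma(S)}=\sigma(P^>_S)$), giving an order isomorphism of the associated spaces; this direction is a direct computation. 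The converse, that an arbitrary order isomorphism $\beta:V\to V'$ must be implementable by a coordinate permutation, is where the uniqueness part of Theorem \ref{RepresentationThm} does the real work: $\beta$ transports the canonical data of $V$ (its ideal lattice, the spaces of positive functionals on each ideal, and the restriction maps) isomorphically onto that of $V'$, and by uniqueness both must coincide with the combinatorial data $(\mathcal{S},(P^>_S))$ up to a permutation of indices. I would therefore take care to state explicitly that the canonical data entering the uniqueness clause is an isomorphism invariant of the ordered vector space, so that the permutation witnessing uniqueness is precisely a $\sigma$ realizing $\equiv$; with that observation in place the correspondence is a bijection, completing the classification.
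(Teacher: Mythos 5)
Your proposal is correct and follows essentially the same route as the paper, which derives the corollary by combining Theorem \ref{RepresentationThm} with item (i) of the remark after it (taking $\psi_I$ to be a vector space isomorphism, giving surjectivity and canonicity of $\phi$), Corollary \ref{RvspacesCor} for the interpolation conditions, and the uniqueness clause of Theorem \ref{RepresentationThm} together with the remark following the corollary for the permutation bookkeeping in both directions. Your additional observations---that $P^>_S\neq\emptyset$ for $S\neq\emptyset$ because each nonzero ideal has an order unit and hence an extreme state, and that $\phi$ is a linear isomorphism so $n'=n$---are details the paper leaves implicit but are verified exactly as you describe.
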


\begin{remark}
If $V,V'$ are two finite dimensional ordered real vector spaces corresponding to two elements $d,d'$ of $D_n$, and $V \iso V'$ then the isomorphism gives rise to a permutation $\sigma$ that induces the equivalence $d \equiv d'$.
While a permutation $\sigma$ that induces an equivalence $d \equiv d'$ lifts to an isomorphism $V \to V'$, this isomorphism need not be unique since there is choice in the maps $\degen{I_{S}} \to \degen{I_{\sigma(S)}}$.
\end{remark}

\section{Classification of finite rank dimension groups}\alabel{dgroups}

In light of Theorem \ref{ConditionsThm}, we see that for any dimension group $G$ satisfying the hypotheses of Theorem \ref{RepresentationThm} and any representation $\phi:G \to \R^n$ given by that theorem, if $V^+$ denotes the real cone in $\R^n$ generated by $\phi(G^+)$ (that is, $V^+$ is closed under addition and multiplication by positive scalars) then $(\R^n,V^+)$ has Riesz interpolation.
In particular, we have the following.

\begin{cor}\alabel{Dimension-subgroup}
Let $G$ be a dimension group that has finitely many ideals, finitely many extreme states on each ideal, and such that, for every ideal $I$, the rank of $\degen{I}$ is at most $2^{\aleph_0}$.
Then $G$ arises as a subgroup of an ordered real vector space with interpolation (with the induced ordering).
In fact, the ordered real vector space can be chosen to have finite dimension.
\end{cor}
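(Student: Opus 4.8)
The plan is to derive Corollary \ref{Dimension-subgroup} almost entirely from the machinery already assembled, treating it as a bookkeeping exercise that assembles Theorem \ref{RepresentationThm} and Corollary \ref{RvspacesCor}. First I would verify that the hypotheses of the corollary are exactly what is needed to invoke Theorem \ref{RepresentationThm}. The only gap between the two hypothesis lists is the rank bound on $\degen{I}$: Theorem \ref{RepresentationThm} requires, for each ideal $I$, an embedding $\psi_I : \degen{I} \to \R^{k_I}$ whose image spans $\R^{k_I}$, whereas the corollary merely assumes $\operatorname{rank} \degen{I} \leq 2^{\aleph_0}$. So the first real step is to produce such an embedding from the rank bound. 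Since $\degen{I}$ is torsion-free (being a subgroup of an ordered group, or a quotient of one) of rank at most $2^{\aleph_0} = \dim_\Q \R$, I would embed it into $\degen{I} \tens_\Z \Q$, a $\Q$-vector space of dimension at most $2^{\aleph_0}$, hence into $\R$ as a $\Q$-vector space; composing gives $\psi_I : \degen{I} \to \R^{k_I}$ with $k_I = 1$ (or, more uniformly, an embedding into $\R$ whose image spans). This is the step I expect to require the most care, since one must check that a rank-$2^{\aleph_0}$ torsion-free group really does $\Q$-linearly embed into $\R$ — which reduces to the fact that $\R$ has $\Q$-dimension exactly $2^{\aleph_0}$ and injections of $\Q$-vector spaces exist whenever the domain dimension does not exceed the codomain dimension.

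With the embeddings $\psi_I$ in hand, Theorem \ref{RepresentationThm} applies and furnishes a one-to-one map $\phi = (\phi_1,\dots,\phi_n) : G \to \R^n$ with image spanning $\R^n$, a sublattice $\mathcal{S} \subseteq 2^{\{1,\dots,n\}}$, and subsets $P^>_S \subseteq S$, so that $(\mathcal{S}, (P^>_S)_{S\in\mathcal{S}}, \phi(G))$ is a \triplename\ and the order on $G$ is the pull-back of the associated cone. I would then identify $G$ with its image $\phi(G) \subseteq \R^n$, an order embedding by part (ii) of the theorem. Next, let $V^+$ be the real cone in $\R^n$ generated by $\phi(G^+)$; the paragraph preceding the corollary already records that this is precisely the cone associated to the \triplename\ $(\mathcal{S}, (P^>_S)_{S\in\mathcal{S}}, \R^n)$, namely $\bigcup_{S\in\mathcal{S}} \{x : x_i > 0\ \forall i \in P^>_S,\ x_i = 0\ \forall i \notin S\}$, and that with this cone the inclusion $G \hookrightarrow \R^n$ is an order embedding.

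It remains to check that $(\R^n, V^+)$ actually has Riesz interpolation, i.e.\ is a genuine ordered real vector space with interpolation, so that $G$ sits inside one as claimed. For this I would apply Corollary \ref{RvspacesCor}, which reduces interpolation for the full space $\R^n$ to conditions (i) and (ii) of Theorem \ref{ConditionsThm}. But these two conditions are \emph{necessary} for any \triplename\ coming from an actual dimension group: indeed, $G$ itself has interpolation by hypothesis, so the necessity argument given just after the statement of Theorem \ref{ConditionsThm} shows that $(\mathcal{S}, (P^>_S)_{S\in\mathcal{S}})$ satisfies condition (i) (from $I_{S_1\cup S_2}^+ = I_{S_1}^+ + I_{S_2}^+$, using Proposition on ideal sums) and condition (ii) (else an order unit of $I_{S_1}$ would be an order unit of the strictly larger $I_{S_2}$). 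Since conditions (i) and (ii) depend only on the combinatorial data $(\mathcal{S}, (P^>_S)_{S\in\mathcal{S}})$, which is shared between $G$ and $\R^n$, Corollary \ref{RvspacesCor} then yields interpolation for $(\R^n, V^+)$.

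Finally, the dimension of this vector space is $n$, which is finite because $G$ has finitely many ideals and finitely many extreme states on each ideal and each $k_I$ was taken finite, so $n = k + \sum_s a_s$ is finite. This gives the last sentence of the corollary. The one genuinely non-mechanical point, as noted, is the embedding of $\degen{I}$ into a finite-dimensional real space under only the cardinality bound $2^{\aleph_0}$; everything else is a direct citation of the preceding results, so I would keep the write-up short and emphasize that the corollary is obtained by collecting Theorem \ref{RepresentationThm}, Corollary \ref{RvspacesCor}, and the necessity half of Theorem \ref{ConditionsThm}.
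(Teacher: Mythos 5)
Your proposal is correct and follows essentially the same route as the paper, which proves the corollary in the paragraph immediately preceding it: apply Theorem \ref{RepresentationThm}, take $V^+$ to be the real cone generated by $\phi(G^+)$, and observe via Theorem \ref{ConditionsThm} (necessity of the purely combinatorial conditions (i) and (ii), which by Corollary \ref{RvspacesCor} suffice for the full space $\R^n$) that $(\R^n, V^+)$ has Riesz interpolation. Your additional care over the embedding $\psi_I:\degen{I}\to\R$ matches the remark following Theorem \ref{RepresentationThm}, extended from rank $\aleph_0$ to rank $2^{\aleph_0}$ using $\dim_\Q \R = 2^{\aleph_0}$, exactly as the paper intends.
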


If $\degen{I}$ has finite rank for every ideal $I$ (in particular, if $G$ has finite rank) then, according to the remark following Theorem \ref{RepresentationThm}, we can find a canonical representation, i.e.\ we can impose a restriction on the representation $\phi$ that ensures that two embeddings $\phi$ and $\phi'$ of $G$ into $\R^n$ give rise to an isomorphism of $\R^n$ (in fact, it is an isomorphism of ordered real vector spaces, when we equip $\R^n$ with the cone generated by $\phi(G^+)$ and that generated by $\phi'(G^+)$).
Here are the conditions on the embedding $\phi:G \to \R^n$ that make it canonical; throughout, we let $V^+$ denote the real cone generated by $\phi(G^+)$, and use $V$ to denote the ordered real vector space $\R^n$, the positive cone of which is $V^+$.
\begin{enumerate}
\item[(i)] The ordered real vector space $(V,V^+)$ has Riesz interpolation;
\item[(ii)] The ideals of $G$ correspond to the ideals of $V$ via the map $I \mapsto \Span \phi(I)$; and
\item[(iii)] For every pair of ideals $I$ and $J$, we have
\[ \Span \phi(I+J) = \Span \phi(I) + \Span \phi(J). \]
\item[(iv)] For each ideal $I$ of $G$, the rank of $\degen{I}$ equals the dimension of $\degen{\phi(I)}$.
\end{enumerate}
A consequence of these conditions (which is explicit in the proof of Theorem \ref{RepresentationThm}) is that the positive functionals on each ideal $I$ correspond exactly to the positive functionals on $\phi(I)$, so that if we pick an order unit for $I$ and use its image as an order unit for $\phi(I)$ then this produces an identification of the state space of $I$ and of $\phi(I)$.

\begin{cor}\alabel{dgroups-Classification}
Let $C$ denote the set of all \triplename s $(\mathcal{S},(P^>_S)_{S \in \mathcal{S}}, G)$ that satisfy conditions (i)-(iv) of Theorem \ref{ConditionsThm} as well as:
\begin{enumerate}
\item[(v)] For every $S \in \mathcal{S}$, letting 
\[ \{i_1,\dots,i_k\} = \{i=1,\dots,n: i \not\in S \text{ and } i \not\in P^>_T\ \forall T \supset S\}, \]
then the rank of
\[ \{(x_{i_1},\dots,x_{i_k}): (x_1,\dots,x_n) \in G \text{ and } x_i = 0\ \forall i \not\in \{i_1,\dots,i_k\} \cup S\} \]
(as an abelian group, i.e., regardless of the embedding into $\R^k$) is equal to $k$.
\end{enumerate}
Define the equivalence relation $\equiv$ on $C$ by $(\mathcal{S},(P^>_S)_{S \in \mathcal{S}}, G) \equiv (\mathcal{S}',(Q^>_S)_{S \in \mathcal{S}'}, H)$ if $\mathcal{S}$ and $\mathcal{S'}$ are sublattices of $2^{\{1,\dots,n\}}$ for the same $n$, and there exists a permutation $\sigma$ on $\{1,\dots,n\}$ and a vector space isomorphism $\phi:\R^n \to \R^n$ satisfying the following.
\begin{enumerate}
\item[(i)] $\mathcal{S}' = \{\sigma(S): S \in \mathcal{S}\}$;
\item[(ii)] For each $S \in \mathcal{S}$, $Q^>_{\sigma(S)} = \sigma(P^>_S)$; and
\item[(iii)] For each $S \in \mathcal{S}$,
\begin{align*}
\phi(\{(x_1,\dots,x_n) \in \R^n: x_i = 0\ \forall i \not\in S \text{ and } x_i > 0\ \forall i \in P^>_S\}) \\
= \{(x_1,\dots,x_n) \in \R^n: x_i = 0\ \forall i \not\in \sigma(S) \text{ and } x_i > 0\ \forall i \in \sigma(P^>_S)\}; \text{ and}
\end{align*}
\item[(iv)] $\phi(G) = H'$.
\end{enumerate}
Then the equivalence classes in $C$ are in one-to-one correspondence with the (ordered group) isomorphism classes of dimension groups which have finitely many ideals, finitely many extreme traces on each ideal, and for which $\degen{I}$ has finite rank for each $I$.
\end{cor}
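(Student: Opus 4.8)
The plan is to prove the bijection by exhibiting maps in both directions and checking that they are mutually inverse, at the level of both objects and equivalence relations. Given a dimension group $G$ with finitely many ideals, finitely many extreme states on each ideal, and $\degen{I}$ of finite rank for every ideal $I$, I apply Theorem \ref{RepresentationThm} using the canonical embeddings $\psi_I:\degen{I}\to\degen{I}\tens\R$ from part (iii) of the remark following that theorem. This yields an order embedding $\phi:G\to\R^n$ together with combinatorial data $(\mathcal{S},(P^>_S)_{S\in\mathcal{S}})$; setting $G_0:=\phi(G)$ produces a standard triple. Since $G$ has Riesz interpolation and $\phi$ is an order embedding, the necessity half of Theorem \ref{ConditionsThm} gives conditions (i)--(iv). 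For condition (v), part (vi) of Theorem \ref{RepresentationThm} identifies the index set $\{i_1,\dots,i_k\}$ with the complement of $D_S$, so $k=n-|D_S|=k_{I_S}$, and the commuting diagram of part (vi) identifies the projected group appearing in (v) with $\psi_{I_S}(\degen{I_S})$; since the canonical $\psi_{I_S}$ is rank-preserving with $k_{I_S}=\operatorname{rank}\degen{I_S}$, this group has rank exactly $k$. Thus the triple lies in $C$, and this assignment is the first map.

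Conversely, Theorem \ref{ConditionsThm} sends any triple in $C$ to an ordered group $(G,G^+)$ that is unperforated and has Riesz interpolation, hence a dimension group; its ideals are the finitely many $I_S$ for $S\in\mathcal{S}$, each carries finitely many extreme states (indexed by $P^>_S$), and each $\degen{I_S}$ has finite rank (equal to $|D_S^c|$ by condition (v)). This is the second map. The two maps are mutually inverse on objects: one composite returns $G$ up to isomorphism because $\phi$ is an order isomorphism onto its image with the induced cone (Theorem \ref{RepresentationThm}(i),(ii)), while for the other composite condition (v) is precisely what forces the canonical representation of the triple's group to reproduce the same integer $n$ and the same data $(\mathcal{S},(P^>_S))$ (up to permutation) via the uniqueness clause of Theorem \ref{RepresentationThm}.

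It remains to match the equivalence relations. That equivalent triples yield isomorphic dimension groups is direct: an equivalence datum $(\sigma,\phi)$ furnishes a linear automorphism of $\R^n$ which, by conditions (iii) and (iv) of the equivalence relation, carries one positive cone onto the other and one subgroup onto the other, hence restricts to an ordered-group isomorphism. The reverse implication---that an isomorphism $\beta:G\to H$ forces the two associated triples to be equivalent---is the heart of the argument. The uniqueness clause of Theorem \ref{RepresentationThm} already supplies the permutation $\sigma$: the ideal lattice, the spaces of positive functionals, and the restriction maps are all preserved by $\beta$, and these determine $(\mathcal{S},(P^>_S))$ up to permutation of the indices. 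The remaining freedom lives in the degeneracy coordinates, where $\beta$ induces isomorphisms $\degen{I}\tens\R\to\degen{\beta(I)}\tens\R$, and these must be assembled into the single global linear map $\phi$.

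This assembly is the main obstacle. The degeneracy blocks attached to different ideals are not independent: they are linked by the commuting diagrams of part (vi) of Theorem \ref{RepresentationThm} and by the way $\degen{I}$ interacts with $\degen{J}$ for nested or intersecting ideals. I expect to build $\phi$ block by block, processing the ideals that are not proper intersections (the $J_s$ of the construction preceding Proposition \ref{linindep}) in an order compatible with the partial order $\prec$ used there, and verifying at each stage that the partially defined map stays consistent with the commuting diagrams and respects the cone structure demanded by condition (iii) of the equivalence relation. Condition (v) guarantees that each block has exactly the dimension dictated by the rank of the corresponding degeneracy quotient, so no clash of dimensions can arise, and the resulting $\phi$ together with $\sigma$ exhibits the required equivalence.
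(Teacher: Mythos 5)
Your overall architecture matches the paper's: the paper disposes of this corollary in one line, as an immediate consequence of Theorems \ref{RepresentationThm} and \ref{ConditionsThm} together with the remark following Theorem \ref{RepresentationThm}, and your two maps and the easy direction of the equivalence are exactly that. But there is a genuine gap where you yourself flag ``the main obstacle'': the passage from an ordered-group isomorphism $\beta:G \to H$ to the global linear isomorphism $\phi$ is left as a plan (``I expect to build $\phi$ block by block\dots''), with the consistency of the block-by-block assembly asserted but never verified; a declared induction over the $J_s$ is not a proof. Moreover, the work you propose is unnecessary, because the paper has already packaged it: given $\beta$, the composite $\phi_H \circ \beta$ is itself a representation of $G$ satisfying all the conclusions of Theorem \ref{RepresentationThm}, taken relative to the embeddings $\psi_{\beta(I)} \circ \bar{\beta}$, where $\bar{\beta}:\degen{I} \to \degen{\beta(I)}$ is induced by $\beta$; since $- \tens_{\Z} \R$ is functorial, these embeddings are again canonical in the sense of case (iii) of the remark, up to linear automorphisms of the targets. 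The remark and the uniqueness clause of Theorem \ref{RepresentationThm} then apply verbatim, producing a permutation $\sigma$ and a vector space isomorphism $\alpha$ of $\R^n$ intertwining $\phi_G$ and $\phi_H \circ \beta$; consequently $\alpha(\phi_G(G)) = \phi_H(H)$, and parts (ii) and (v) of Theorem \ref{RepresentationThm} show that $\alpha$ carries the cone piece indexed by $S$ onto the piece indexed by $\sigma(S)$, which is precisely items (i)--(iv) of the equivalence relation. In short, the step you leave open is exactly the content of the cited uniqueness statement, and the correct move is to invoke it, not to rebuild it.

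A second, smaller gap: your verification of condition (v) relies on part (vi) of Theorem \ref{RepresentationThm}, but (vi) is stated only for ideals that are \emph{not} the proper intersection of two ideals, whereas (v) quantifies over every $S \in \mathcal{S}$. For $S$ with $I_S$ a proper intersection you need a supplementary argument. One route: show that every $\tau$-type index $i \notin S$ lies in $P^>_T$ for some $T \in \mathcal{S}$ with $T \supset S$ --- indeed, if $i \notin S$ then $\tau_i$ vanishes on $I_S$ and is positive on $I_{\tau_i}$, hence positive on $I_S + I_{\tau_i}$ because $(I_S+I_{\tau_i})^+ = I_S^+ + I_{\tau_i}^+$, so maximality of $I_{\tau_i}$ forces $I_S \subsetneq I_{\tau_i}$ --- whence the index set $\{i_1,\dots,i_k\}$ in (v) consists exactly of the $f$-type coordinates outside $S$, and one can then identify the projected group with the image of $\degen{I_S}$ just as in your non-intersection case. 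With these two repairs (cite the remark for the global $\phi$; extend the rank computation to intersection ideals) your proposal becomes a correct expansion of the paper's one-line proof.
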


\begin{remark}
The classification of finite rank dimension groups follows: these correspond exactly to the equivalence classes in $C$ for which the group $G$ has finite rank.
\end{remark}

\begin{proof}
This is an immediate consequence of Theorems \ref{RepresentationThm} and \ref{ConditionsThm}, and the remark following Theorem \ref{RepresentationThm}.
\end{proof}

\begin{cor}\alabel{Qtensor-characterization}
Let $(G,G^+)$ be a finite rank ordered group that is unperforated.
Then $G$ is a dimension group if and only if $G \tens \Q$ is a dimension group and, for every triple $I,J,K$ of ideals of $G$, such that $I \subset J \subset K$ and $J/I$ is cyclic, either the unique state on $J/I$ does not extend to $K/I$ or else $J/I$ is a direct summand in $K/I$.
\end{cor}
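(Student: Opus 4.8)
The plan is to reduce the whole equivalence to Theorem \ref{ConditionsThm} by comparing $G$ with $G \tens \Q$ through a single representation, so that the abstract hypotheses turn into the combinatorial conditions (i)--(iv). For the ``only if'' direction I would argue as follows. Finite rank forces chains of ideals to pull back to chains of $\Q$-subspaces of $G \tens \Q$, hence to have bounded length; so the ideals of $G$ satisfy the ascending chain condition and every ideal has an order unit. Proposition \ref{Qtensor}(iii) then makes $G \tens \Q$ a dimension group (it is automatically unperforated, being a $\Q$-vector space). For the cyclic condition, given ideals $I \subset J \subset K$ with $J/I$ cyclic, I would pass to $K/I$, which is a dimension group with an order unit and in which $J/I$ is a cyclic ideal, and apply Proposition \ref{CyclicDichotomy} to read off the stated dichotomy.

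For the ``if'' direction I would first set up the representation. Since $G \tens \Q$ is a finite rank dimension group it satisfies the hypotheses of Theorem \ref{RepresentationThm}, giving $\phi \colon G \tens \Q \to \R^n$ with data $(\mathcal{S},(P^>_S)_{S \in \mathcal{S}})$; restricting $\phi$ to $G$ yields a \triplename \ $(\mathcal{S},(P^>_S)_{S \in \mathcal{S}}, \phi(G))$ with the \emph{same} lattice and sets $P^>_S$. The spanning requirement survives because $\Q \cdot \phi(G) = \phi(G \tens \Q)$ already spans each coordinate subspace, and the associated cone equals $\phi(G^+)$ since $G^+ = (G \tens \Q)^+ \cap \phi(G)$. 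By Theorem \ref{ConditionsThm} it then suffices to verify conditions (i)--(iv) for this triple. Conditions (i) and (ii) depend only on $\mathcal{S}$ and the $P^>_S$, which are the data of the dimension group $G \tens \Q$ (equivalently, they are the conditions of Corollary \ref{RvspacesCor}), so they come for free; the real work is in (iii) and (iv).

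To match the cyclic hypothesis with (iii) and (iv), I would examine each pair $S_1 \subsetneq S_2$ with $S_1$ maximal in $S_2$, writing $M := I_{S_2}/I_{S_1}$, a finite rank ordered group with $M \tens \Q$ simple. When $M$ is cyclic, $S_2 = S_1 \dunion \{k\}$, and applying the cyclic hypothesis to $I_{S_1} \subset I_{S_2} \subset I_T$ for each $T \supseteq S_2$ should reproduce exactly the alternative in (iv)(b): either $k \notin P^>_T$ (the state fails to extend) or $T \setminus \{k\} \in \mathcal{S}$ (the lattice shadow of $M$ splitting off $I_T/I_{S_1}$). For (iii), I would note that $G \tens \Q$ already gives the rational identity $(I_{S_1}+I_{S_2}) \tens \Q = I_{S_1 \cup S_2} \tens \Q$, so $I_{S_1 \cup S_2}/(I_{S_1}+I_{S_2})$ is torsion, and then use the genuine direct-summand decompositions furnished by the cyclic hypothesis, together with an induction over the finite ideal lattice, to produce integral representatives and conclude $I_{S_1}+I_{S_2} = I_{S_1 \cup S_2}$.

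The hardest part, I expect, is exactly (iii) and the density alternative (iv)(a) for the \emph{non-cyclic} simple subquotients. Condition (iii) is an integral statement that genuinely fails to follow from its rational version: for instance, with $G = \Z^2 + \Z(\tfrac12,\tfrac12) \subseteq \R^2$ one has $G \tens \Q = \Q^2$ a dimension group while $I_{\{1\}}+I_{\{2\}} = \Z^2 \subsetneq G$, the discrepancy being a cyclic ideal whose state extends but which does not split off. So the crux is to show that the cyclic hypothesis really supplies honest group-theoretic summands, splitting $G$ into cyclic direct summands and a complement on which every simple subquotient has dense state-image; establishing that this dense alternative is available for every non-cyclic $M$ (so that (iv)(a) holds whenever $M$ is not cyclic) is the delicate step I would scrutinize most carefully.
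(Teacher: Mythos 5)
Your ``only if'' direction is correct, and it actually supplies more detail than the paper does: the paper states Corollary \ref{Qtensor-characterization} with no proof at all (the proof environment preceding it belongs to Corollary \ref{dgroups-Classification}), so there is nothing there to compare against. Your steps are the right ones: finite rank plus unperforation gives, via Proposition \ref{Qtensor}(i), bounded chains of ideals, hence the ascending chain condition and order units on every ideal; Proposition \ref{Qtensor}(iii) handles $G \tens \Q$; and Proposition \ref{CyclicDichotomy} applied to the quotient dimension group $K/I$, in which $J/I$ is a cyclic ideal, yields exactly the stated dichotomy. Your reduction of the ``if'' direction is also the natural one---represent $G \tens \Q$ by Theorem \ref{RepresentationThm}, restrict to $G$ to obtain a \triplename\ with the same data $(\mathcal{S},(P^>_S)_{S\in\mathcal{S}})$, and observe that conditions (i) and (ii) of Theorem \ref{ConditionsThm} are inherited from the rational group---but at that point the proposal stops being a proof: the verification of (iii) and of the density alternative (iv)(a) is only gestured at, and you yourself flag it as the step to scrutinize.

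That step is not merely delicate; it fails, so no elaboration of your sketch can close the gap. Take $G = \Z^2$ with $G^+ = \{0\} \cup \{(a,b) : a > 0,\ b > 0\}$. This is an unperforated ordered group of rank $2$, and it is simple (every nonzero positive element is an order unit), so its only ideals are $\{0\}$ and $G$ and the cyclic-triple hypothesis of the corollary is vacuous. Moreover $G \tens \Q \iso \Q^2$ with the cone $\{0\} \cup \{(a,b) \in \Q^2 : a,b > 0\}$ has Riesz interpolation: given $a^{(1)},a^{(2)} \leq b^{(1)},b^{(2)}$, either some $a^{(s)} = b^{(t)}$ is itself an interpolant, or all four differences are strictly positive in both coordinates and density of $\Q$ in $\R$ produces one. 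Yet $G$ fails interpolation: for $(0,0),(1,-1) \leq (2,1),(3,1)$ the constraints over $\Z$ force $z = (2,1)$, and $(3,1)-(2,1) = (1,0) \notin G^+$. In the language of Theorem \ref{ConditionsThm} this is precisely a failure of (iv)(a): the noncyclic simple group $G$ sits discretely in $\R^2$, and the ``cyclic or dense'' dichotomy invoked in the paper for (iv)(a) is valid only for groups already known to have interpolation, so it cannot be applied to $G$---density of $\phi(G \tens \Q)$ says nothing about density of $\phi(G)$. Condition (iii) is likewise underivable from the hypotheses: one can build a rank-six $G \subseteq \R^2 \dsum \R^2$ (two copies of a dense rank-three group, glued by a half-diagonal element) whose four ideals all have noncyclic dense simple subquotients, so the cyclic hypothesis is again vacuous, but with $I_{\{1,2\}} + I_{\{3,4\}}$ of index $2$ in $G = I_{\{1,2,3,4\}}$. (Your example $\Z^2 + \Z(\tfrac12,\tfrac12)$ correctly shows (iii) is genuinely integral, but there the cyclic hypothesis does fail---the state on $\Z \times \{0\}$ extends without a splitting---so it is consistent with the corollary; the examples above are not.) The conclusion is that the corollary as printed is missing a hypothesis: the integral conditions (iii) and (iv)(a) of Theorem \ref{ConditionsThm} for noncyclic simple subquotients must be assumed rather than derived. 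Your instinct about where the difficulty lies was exactly right, and the honest answer is that it cannot be overcome.
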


\begin{question}
Although our methods only allow us to prove Corollaries \ref{Dimension-subgroup} and \ref{Qtensor-characterization} for dimension groups with finite-rank type restrictions, there is no obvious reason why these results would not hold for general dimension groups (the generalization of Corollary \ref{Dimension-subgroup} would not use a finite dimensional ordered real vector space).
This leads one to ask: do these results of them hold for general dimension groups?
\end{question}


\begin{thebibliography}{10}

\bibitem{CuntzKrieger:algebras}
Joachim Cuntz and Wolfgang Krieger.
\newblock A class of {$C^{\ast} $}-algebras and topological {M}arkov chains.
\newblock {\em Invent. Math.}, 56(3):251--268, 1980.

\bibitem{Edwards}
David~Albert Edwards.
\newblock S\'eparation des fonctions r\'eelles d\'efinies sur un simplexe de
  {C}hoquet.
\newblock {\em C. R. Acad. Sci. Paris}, 261:2798--2800, 1965.

\bibitem{EffrosHandelmanShen}
Edward~G. Effros, David~E. Handelman, and Chao~Liang Shen.
\newblock Dimension groups and their affine representations.
\newblock {\em Amer. J. Math.}, 102(2):385--407, 1980.

\bibitem{EffrosShen:DimGroups}
Edward~G. Effros and Chao~Liang Shen.
\newblock Dimension groups and finite difference equations.
\newblock {\em J. Operator Theory}, 2(2):215--231, 1979.

\bibitem{EffrosShen:ContinuedFractions}
Edward~G. Effros and Chao~Liang Shen.
\newblock Approximately finite {$C^{\ast} $}-algebras and continued fractions.
\newblock {\em Indiana Univ. Math. J.}, 29(2):191--204, 1980.

\bibitem{EffrosShen:Geometry}
Edward~G. Effros and Chao~Liang Shen.
\newblock The geometry of finite rank dimension groups.
\newblock {\em Illinois J. Math.}, 25(1):27--38, 1981.

\bibitem{Elliott:classificationAF}
George~A. Elliott.
\newblock On the classification of inductive limits of sequences of semisimple
  finite-dimensional algebras.
\newblock {\em J. Algebra}, 38(1):29--44, 1976.

\bibitem{Elliott:classificationRR0}
George~A. Elliott.
\newblock On the classification of {$C^*$}-algebras of real rank zero.
\newblock {\em J. Reine Angew. Math.}, 443:179--219, 1993.

\bibitem{Elliott:classificationAT}
George~A. Elliott.
\newblock A classification of certain simple {$C^*$}-algebras. {II}.
\newblock {\em J. Ramanujan Math. Soc.}, 12(1):97--134, 1997.

\bibitem{Farhane:coding}
Lotfi Farhane.
\newblock Coding of the dimension group.
\newblock {\em Adv. Math.}, 206(2):455--465, 2006.

\bibitem{Fuchs:InterpVSpaces}
L.~Fuchs.
\newblock On partially ordered vector spaces with the {R}iesz interpolation
  property.
\newblock {\em Publ. Math. Debrecen}, 12:335--343, 1965.

\bibitem{Fuchs:RieszGrps}
L.~Fuchs.
\newblock Riesz groups.
\newblock {\em Ann. Scuola Norm. Sup. Pisa (3)}, 19:1--34, 1965.

\bibitem{Fuchs:rieszspaces}
L.~Fuchs.
\newblock {\em Riesz vector spaces and {R}iesz algebras}.
\newblock Queen's Papers in Pure and Applied Mathematics, No. 1. Queen's
  University, Kingston, Ont., 1966.

\bibitem{Goodearl:book}
K.~R. Goodearl.
\newblock {\em Partially ordered abelian groups with interpolation}, volume~20
  of {\em Mathematical Surveys and Monographs}.
\newblock American Mathematical Society, Providence, RI, 1986.

\bibitem{GoodearlHandelman:Metric}
K.~R. Goodearl and D.~E. Handelman.
\newblock Metric completions of partially ordered abelian groups.
\newblock {\em Indiana Univ. Math. J.}, 29(6):861--895, 1980.

\bibitem{GoodearlHandelmanLawrence}
K.~R. Goodearl, D.~E. Handelman, and J.~W. Lawrence.
\newblock Affine representations of {G}rothendieck groups and applications to
  {R}ickart {$C^{\ast} $}-algebras and {$\aleph _{0}$}-continuous regular
  rings.
\newblock {\em Mem. Amer. Math. Soc.}, 26(234):vii+163, 1980.

\bibitem{Krieger:DimFns}
Wolfgang Krieger.
\newblock On dimension functions and topological {M}arkov chains.
\newblock {\em Invent. Math.}, 56(3):239--250, 1980.

\bibitem{Villadsen:AHrange}
Jesper Villadsen.
\newblock The range of the {E}lliott invariant of the simple {AH}-algebras with
  slow dimension growth.
\newblock {\em $K$-Theory}, 15(1):1--12, 1998.

\end{thebibliography}
\end{document}